\documentclass[11pt,a4paper,reqno,xcolor=dvipsnames,]{amsart}
\usepackage[hmargin=2cm,bmargin=3.2cm,tmargin=3.2cm]{geometry}
\usepackage{enumerate}
\usepackage{amsmath,amsthm,amssymb,amsfonts,latexsym}
\usepackage{mathbbol}

\usepackage{esvect}
\usepackage{booktabs}
\usepackage{faktor}

\usepackage[colorlinks, linkcolor=blue, filecolor=blue,
 citecolor = olive, urlcolor=purple]{hyperref}
\usepackage[gen]{eurosym}
\usepackage[english]{babel}
\usepackage[normalem]{ulem}
\usepackage{latexsym}
\usepackage{orcidlink}
\usepackage{tikz}					
\usetikzlibrary{arrows.meta, decorations.markings,calc}

\DeclareMathOperator{\dd}{\mbox{d}\!} 
\DeclareMathOperator{\Diam}{diam} 
\DeclareMathOperator{\Tri}{tri} 
\DeclareMathOperator{\mGirth}{girth} 
\DeclareMathOperator{\rad}{rad} 
\DeclareMathOperator{\Inr}{inr} 
\DeclareMathOperator{\dist}{dist} 

\DeclareMathOperator{\Vol}{Vol}

\DeclareMathOperator{\Id}{Id}

\DeclareMathOperator{\diag}{diag}

\newcommand{\energy[1]}{{\mathcal L}_{#1}} 
\newcommand{\denergy[1]}{{\mathcal L^{\mathrm{D}}}_{#1}} 
\newcommand{\nenergy[1]}{{\mathcal L^{\mathrm{\#\mV}}}_{#1}} %

 \def\LGVD{L_{\mG;\mVD}} 
\def\DeltaGVD{\Delta_{\Graph;\mVD}} 
 
  \def\mG{\mathsf{G}}
 
 \def\mV{\mathsf{V}}
 \def\mE{\mathsf{E}}
 
 \def\mK{\mathsf{K}}
 \def\mC{\mathsf{C}}

 \def\mT{\mathsf{T}}
  \def\mv{\mathsf{v}}
 \def\mw{\mathsf{w}}
 \def\mz{\mathsf{z}}
 \def\me{\mathsf{e}}

 \def\mw{\mathsf{w}}
 \def\mz{\mathsf{z}}

\newcommand{\mVD}{{\mV^{\mathrm D}}}
\newcommand{\mVDn}{{\mV^{\mathrm D}_n}}

\newcommand{\noptenergylax[1]}{{{\mathcal L}}^{C}_{#1}} %
\newcommand{\noptenergy[1]}{{\mathcal L}^R_{#1}} 

\newcommand{\Graph}{\mathcal{G}}

\newcommand{\R}{\mathbb{R}}
\newcommand{\N}{\mathbb{N}}

\newcommand{\C}{\mathbb{C}}

\def\XXint#1#2#3{{\setbox0=\hbox{$#1{#2#3}{\int}$ }
\vcenter{\hbox{$#2#3$ }}\kern-.6\wd0}}

\usepackage{aliascnt}

\theoremstyle{plain}
\newtheorem{theo}{Theorem}[section]

\newaliascnt{cor}{theo}
\newaliascnt{prop}{theo}
\newaliascnt{lemma}{theo}
\newaliascnt{conj}{theo}

\newtheorem{lemma}[lemma]{Lemma}
\newtheorem{prop}[prop]{Proposition}
\newtheorem{cor}[cor]{Corollary}
\newtheorem{conj}[conj]{Conjecture}

\aliascntresetthe{cor}
\aliascntresetthe{prop}
\aliascntresetthe{lemma} 
\aliascntresetthe{conj}

\theoremstyle{defi} 

\newaliascnt{defi}{theo}
\newaliascnt{assum}{theo}
\newaliascnt{assums}{theo}
\newaliascnt{prob}{theo}

\newtheorem{defi}[defi]{Definition}

\aliascntresetthe{defi}
\aliascntresetthe{assum}
\aliascntresetthe{assums}
\aliascntresetthe{prob}

\theoremstyle{rem}

\newaliascnt{rems}{theo}
\newaliascnt{rem}{theo}
\newaliascnt{exa}{theo}
\newaliascnt{exs}{theo}

\newtheorem{rem}[rem]{Remark}

\aliascntresetthe{rems}
\aliascntresetthe{rem}
\aliascntresetthe{exa}
\aliascntresetthe{exs}

\numberwithin{equation}{section}
\numberwithin{lemma}{section}

\title[The role of expanders in the spectral geometry of metric graphs]{The role of expanders in the\\ spectral geometry of metric graphs} 

\author[D.~Mugnolo]{Delio Mugnolo
\orcidlink{0000-0001-9405-0874}}

\address{Lehrstuhl Analysis, Fakult\"at Mathematik und Informatik, Fern\-Universit\"at in Hagen, D-58084 Hagen, Germany}
\email{delio.mugnolo@fernuni-hagen.de}

\thanks{I would like to thank Robert J.\ George (Caltech) for suggesting the crucial ChatGPT-aided arguments in the proof of \autoref{rem:no-meandist-bound} and of \autoref{theo:no-inr-bound}; and Luís Baptista (Lisbon), Gregory Berkolaiko (Texas A\&M), James B.\ Kennedy (Aveiro) and Noah Kravitz (Oxford) and Pavel Kurasov (Stockholm) for many interesting discussions. Portions of the manuscript were revised with the assistance of the LLMs Qwen, Gemini and Claude.
}
\keywords{Expanders; Metric graphs; Spectral Geometry}
\subjclass[2010]{05C12, 30L15, 51K05, 54E45. 81Q35}

\begin{document}

\maketitle

\begin{abstract}
Expanders are families of graphs that are sparse in edges but dense in connectivity. 

After reviewing combinatorial and spectral definitions of expanders, we use precise lower bounds on Ramanujan graphs to investigate upper bounds -- and, specifically, the lack thereof -- on the eigenvalues of the Laplacian on \emph{metric} graphs in terms of volume, diameter, girth,  mean distance, and torsional rigidity, among others.
\end{abstract}

\section{Introduction}

Expanders are growing families of (discrete) graphs whose local geometry essentially decouples from their global geometry: even as they become larger and larger, their fine structure forces them to sustain well-mixing diffusive processes. A canonical example of this behavior is that their diameters only grow logarithmically in the number of vertices.

Expander graphs were first explicitly constructed by Gregory Margulis in \cite{Mar73}, who called them \emph{concentrators}. Their spectral characterization was established by Noga Alon and Vitali Milman in~\cite{AloMil85,Alo86} through the discrete Cheeger inequality, laying the foundations of modern spectral graph theory. 
They are nowadays among the most important objects in discrete mathematics and theoretical computer science. By contrast, their analytical aspects have received comparatively less attention, despite their spectacular role in the recent solution of the Kadison--Singer Problem (see \cite{MarSpiSri15b,CasTre06}).

Indeed, expanders also provide a remarkable class of spaces exhibiting rapid heat dissipation and uniform functional inequalities, and extreme spectral-geometric behavior. These analytical properties motivate the study of metric expanders from the viewpoint of spectral geometry, where expanders play the role of fairy godfathers as they allow us to disprove arguably natural conjectures about the rate of convergence of equilibrium in diffusive processes, measured in terms of a metric graph's \emph{spectral gap}.

Let us say that a positive quantity $\Phi(\Graph)$ attached to a metric graph $\Graph$ is a \emph{metric quantity} if it scales linearly in the edge length: this is the case for the  volume $|\Graph|$ and the diameter $\Diam(\Graph)$, but also for more exotic quantities like the mean distance $\rho(\Graph)$, the inradius $\Inr(\Graph;\mVD)$ or the cubic root of the torsional rigidity $T(\Graph;\mVD)$ with respect to a set $\mVD$ of vertices where a Dirichlet condition is imposed. The main -- and simple -- idea of this note is that if we can prove that a metric quantity tends to $\infty$ along a suitably chosen family of expanders, then no upper bound of the spectral gap in terms of the square of said metric quantity is possible; or, if the metric quantity tends to 0, then no lower bound is possible. In other words: they do not represent an impediment to arbitrarily large -- respectively, arbitrarily small -- mixing rates of a diffusive process. In the context of metric graphs these ideas have been, to the best of my knowledge, first developed in \cite{BerKenKur23}.

We first review in \autoref{sec:laplacianseverywhere} known facts from the theory of Laplacians on discrete and metric graphs, with a special focus on Ramanujan graphs, a two-parameter (discrete) family of regular graphs that is known to deliver expanders.  We also offer a reminder of metric graphs, precisely introducing the spectral gap.

In \autoref{sec:failure} we link both environments and show how expander metric graphs can be used, through a fairly transparent mechanism, to disprove geometric upper bounds on the lowest positive Laplacian eigenvalue $\lambda_2(\Graph)$. We here recall known results, offer new insights on old results, and present an AI-assisted proof of a new result, which all share the same basic strategy. Our main result in this section is \autoref{cor:lowerb}, which unifies our theory and shows how to examine the failure of lower bounds, too. Throughout this section, our method is always the same: we fix one parameter of the Ramanujan graphs (corresponding to their degree) and let the second parameter (corresponding to their vertex number) tend to $\infty$. 

In~\autoref{sec:dirichlet} we turn our focus to metric graphs with Dirichlet boundary conditions and offer two contributions to the study of the lowest positive Laplacian eigenvalue  $\lambda_1(\Graph;\mVD)$: in~\autoref{sec:torsional} we show how expanders can be used to study \textit{optimality} of upper bounds: our second main result, \autoref{rem:no-polya-szego-improved}, proves that on metric graphs the Pólya--Szeg\H{o} inequality is asymptotically sharp. This proof is technically subtler than those in \autoref{sec:failure}, since it is based on asymptotic arguments along both parameters of the Ramanujan graphs.
In \autoref{sec:meandistdir} we show how our main theoretical framework, while fairly robust, requires a fine tuning that the currently available expander theory is not delivering.

{
Overall, throughout this article we have \underline{disproved} that the following bounds hold:
\begin{itemize}
\item $\lambda_2(\Graph)\lesssim \frac{1}{|\Graph|^2}$, for any unilateral metric graph $\Graph$, whether non-bipartite or bipartite (\autoref{prop:no-volume-bound}; counterexample: any sequence of Ramanujan graphs -- other than cycles -- with strictly increasing number of vertices);
\item $\lambda_2(\Graph)\lesssim \frac{1}{\Diam(\Graph)^2}$, for any unilateral metric graph $\Graph$, whether non-bipartite or bipartite   (\autoref{rem:no-diam-bound};
counterexample:  any sequence of $d$-regular Ramanujan graphs with strictly increasing number of vertices);
\item $\lambda_2(\Graph)\lesssim \frac{1}{\mGirth(\Graph)^2}$, for any unilateral metric graph $\Graph$, whether non-bipartite or bipartite  (\autoref{rem:no-girth-bound};
counterexample:  either non-bipartite or bipartite LPS expander);
\item $\lambda_2(\Graph)\lesssim \frac{1}{\rho(\Graph)^2}$, for any unilateral metric graph $\Graph$, whether non-bipartite or bipartite  (\autoref{rem:no-meandist-bound};
counterexample:  any sequence of $d$-regular Ramanujan graphs with strictly increasing number of vertices);
\item $\lambda_2(\Graph) \lesssim \frac{F_\alpha(\Diam(\Graph), \rho(\Graph), \mGirth(\Graph))}{|\Graph|^{2+\alpha}}$, for any $\alpha>-2$, any
positive, continuous, $\alpha$-homogeneous function  $F_\alpha$, and any unilateral metric graph $\Graph$, whether non-bipartite or bipartite  (\autoref{cor:lowerb}; counterexample: either non-bipartite or bipartite LPS expander);
\item $\lambda_2(\Graph) \lesssim{F_{-2}(\Diam(\Graph), \rho(\Graph), \mGirth(\Graph))}$, for any positive, continuous, $(-2)$-homogeneous function  $F_{-2}$ and any unilateral metric graph $\Graph$, whether non-bipartite or bipartite
 (\autoref{cor:lowerb}; counterexample: either non-bipartite or bipartite LPS expander);
\item $\lambda_2(\Graph) \gtrsim \frac{|\Graph|^{\alpha}}{F_{2+\alpha}(\Diam(\Graph), \rho(\Graph), \mGirth(\Graph))}$, for any $\alpha>0$, any positive, continuous, $(2+\alpha)$-homogeneous function  $F_{2+\alpha}$, and any unilateral metric graph $\Graph$, whether non-bipartite or bipartite (\autoref{cor:lowerb}; counterexample: either non-bipartite or bipartite LPS expander);
\item $\lambda_1(\Graph;\mVD)T(\Graph;\mVD)\le C |\Graph|$, for any $C<1$ and any non-bipartite unilateral metric graph $\Graph$ (\autoref{rem:no-polya-szego-improved}; counterexample: non-bipartite LPS expander);
\item $\lambda_1(\Graph;\mVD)\lesssim \frac{1}{\Inr(\Graph;\mVD)^2}$, for any unilateral metric graph $\Graph$, whether non-bipartite or bipartite (\autoref{theo:no-inr-bound}; counterexamples: any sequence of non-bipartite Ramanujan graphs with strictly increasing number of vertices; balls exhausting the $d$-regular tree).
\end{itemize}
}

\section{Laplacians on graphs}\label{sec:laplacianseverywhere}

\subsection{Combinatorial graphs}\label{sec:comblapl}
Let $\mG=(\mV,\mE)$ be a finite simple, undirected, unweighted, connected graph with vertex set $\mV$ and edge set $\mE$: to avoid confusion, we refer to it as \textit{combinatorial graph} (or, sometimes, simply as \emph{graph}). If two vertices $\mv,\mw$ are adjacent, then we write $\mv\sim \mw$ (or $\mv\stackrel{\me}{\sim} \mw$ if we want to identify the edge $\me$ that links $\mv,\mw$: in that case, we write $\me=\{\mv,\mw\}$).

We can associate with $\mG$ a \emph{Laplacian}, i.e., a matrix $L_\mG:=\Id_\mV-D^{-1}A$ indexed in $\mV$: here $A$ is the adjacency matrix of $\mG$, i.e.,
\[
A_{\mv\mw}=\begin{cases}
1 \qquad &\hbox{if $\mv,\mw$ are adjacent},\\
0 &\hbox{otherwise}.
\end{cases}
\]
The vector $A\mathbf{1}$ is the \emph{degree sequence} of $\mG$, i.e.: the degree $\deg(\mv):=(A\mathbf{1})_{\mv}$ of $\mv\in \mV$ is the number of vertices $\mw$ that are adjacent to $\mv$ and $D:=\diag(\deg(\mv))_{\mv\in \mV}$. The Laplacian $L_\mG$ is a linear operator on a finite dimensional space, but what is crucial is that its operator norm is uniform in the number of vertices of $\mG$. (This is why it is often referred to as \textit{normalised} Laplacian, to distinguish it from its older sibling, the (unnormalised) Laplacian $\tilde{L}_\mG:=D-A$, which was studied as early as \cite{Kir47}.)

The spectral properties of $L_\mG$ are tightly related to the ``geometric'' properties of $\mG$, like connectedness, symmetries, isoperimetric properties, see \cite{Chu97}.
Observe that while $L_\mG$ is not Hermitian, it is similar (via $D^\frac{1}{2}$) to the real, symmetric matrix $\Id_\mV-D^{-\frac{1}{2}}AD^{-\frac{1}{2}}$. Accordingly, its spectrum is real. An equivalent, more operator theoretical way of framing this issue is that (unlike $\tilde{L}_\mG$) $L_\mG$ is not self-adjoint with respect to  norm
\[
\|f\|_{\ell^2(\mV)}:=\left(\sum_{\mv\in \mV}|f(\mv)|^2\right)^\frac12,\qquad f:\mV\to \C,
\]
but it \textit{is} self-adjoint in the space of square summable functions for the atomic measure with respect to another density, namely
\[
\|f\|_{\ell^2_{\deg}(\mV)}:=\left(\sum_{\mv\in \mV}|f(\mv)|^2 \deg(\mv)\right)^\frac12,\qquad f:\mV\to \C.
\]
Both are Hilbert space norms (indeed, they are equivalent, as long as $\mV$ is a finite set; and, in the regular case of $\deg\equiv d$ we will deal with in this paper, $\|\cdot\|_{\ell^2_{\deg}}=d\|\cdot\|_{\ell^2}$). We will in the following only focus on the metric measure space $(\mG,\dist_\mG,\deg)$, where $\dist_\mG$ is the canonical \textit{shortest path distance} on $\mG$. We refer to \cite{Die17} for this and further elementary notions of graph theory.

It can be checked directly that, for any $f:\mV\to \C$, the function $Lf:\mV\to \C$ is given by
\[
L_\mG f(\mv)=\frac{1}{\deg(\mv)}\sum_{\substack{\mw\in \mV\\ \mw\sim \mv}} (f(\mv)-f(\mw)),\qquad \mv\in \mV.
\]
A direct computation shows that $L_\mG$ is the self-adjoint operator on $\ell^2_{\deg}(\mV)$ associated with the quadratic form $\mathfrak{a}$ given by
\[
\mathfrak{a}(f):=\sum_{\me\in \mE}|\nabla f(\me)|^2,\qquad f:\mV\to \C,
\]
where $\nabla f$ is the discrete gradient defined by
\[
\nabla f(\me):=f(\mv)-f(\mw)\qquad \hbox{whenever }\me=\{\mv,\mw\}.
\]

Because $L_\mG$ is positive semi-definite in the Hilbert space $\ell^2_{\deg}(\mV)$, all its $\#\mV$ eigenvalues are non-negative.
The lowest eigenvalue is always 0: it is simple, with the corresponding eigenspace  spanned by the constant functions on $\mV$.
It follows from Ger\v{s}gorin's Theorem that all eigenvalues  of $L_\mG$ lie in the interval $[0,2]$:
\[
0=\nu_1(\mG)\le\nu_2(\mG)\le\ldots \nu_{\#\mV}(\mG)\le 2;
\]
and, in fact, $\nu_2(\mG)>0$ since $\mG$ is assumed to be connected.
By Courant's minmax principle, these eigenvalues can be determined in terms of the Rayleigh quotient
\[
R(\mG,f):=\frac{\mathfrak{a}(f)}{\|f\|^2_{\ell^2_{\deg}(\mV)}},\qquad f:\mV\to \C,
\]
so in particular
\begin{equation}\label{eq:courantf-disc}
\nu_2(\mG)=\min_{\substack{f:\mV\to\C \\ \sum\limits_{\mv\in\mV}f(\mv)\deg(\mv)=0}} \frac{\mathfrak{a}(f)}{\|f\|^2_{\ell^2_{\deg}(\mV)}}.
\end{equation}



\subsection{Foundations of Expander Theory}\label{sec:foundations}

Let $\mG = (\mV,\mE)$ be as in \autoref{sec:comblapl}, and let us furthermore assume  $\mG$ to $d$-regular, i.e., $\deg(\mv)=d$ for all $\mv\in\mV$. Here and in the following, we let for all $A\subset \mV$, 
\[
\partial A := \{ \{\mv,\mw\} \in \mE : \mv \in A, \mw \notin A \}\quad \hbox{and}\quad \Vol(A):=\sum_{\mv\in A}\deg(\mv)=d\#A.
\]
Also, the \emph{Cheeger constant} 
 $h(\mG)$ of $\mG$ is defined as
\begin{equation}\label{eq:cheeger-two-sided}
h(\mG) = \min_{\emptyset \ne A \subsetneq \mV} \frac{\#\partial A}{\min\{\Vol(A),\Vol(\mV\setminus 	A)\}}.
\end{equation}
Because each $\mv\in A$ has at most $d=\deg(\mv)$ neighbours outside $A$, 
 $\#\partial A\le \Vol(A)$; and, likewise, $\#\partial A\le \Vol(\mV\setminus A)$. So, $h(\mG)\le 1$ and $\nu_2(\mG)\le 2$. What about lower bounds?

It was shown in \cite{Fie73} that the size of $\nu_2$ is a quantitative measure of the connectedness of $\mG$: the larger $\nu_2$, the more connected $\mG$. In the same years, the theory of families of graphs display strong connectedness was being explored \cite{Mar73}. This eventually led to the theory of expanders.

\begin{defi}
\label{def:combin-exp}
A sequence of $d$-regular graphs $(\mG_n)_{n \in \N}$ with $d$ independent of $n$ and $\#\mV_n \to \infty$ is a \emph{combinatorial expander family} if there exists $\epsilon > 0$ such that 
\[
h(\mG_n) \ge \epsilon\qquad\hbox{ for all }n\in \N.
\]
\end{defi}

\begin{defi}
A sequence of $d$-regular graphs $(\mG_n)_{n \in \N}$ is a \emph{spectral expander family} if there exists $\tilde\epsilon > 0$ 
such that 
\begin{equation}
\nu_2(\mG_n) \ge \tilde\epsilon\qquad\hbox{for all }n\in \N.
\end{equation}
\end{defi}

The combinatorial and spectral formulations are linked fundamentally by the \emph{Cheeger Inequality}
\begin{equation}
\label{eq:cheeger}
\frac{h^2(\mG)}{2}< \nu_2(\mG)\le  2h(\mG)
\end{equation}
cf.~\cite{AloMil84} and~\cite[Section~2.3]{Chu97}: so, the lowest positive eigenvalue of a connected graph $\mG$ cannot be made arbitrarily small. If, additionally, $\mG$ is regular, then the \emph{Alon--Boppana bound} in \cite[Theorem~1]{Nil91} dictates that for every graph $\mG$ that contains two edges of distance $2\delta+2$ there holds
\begin{equation}\label{eq:alon-boppana}
\nu_2(\mG)\le 1-\frac{2\sqrt{d-1}}{d}+\frac{2\sqrt{d-1}-1}{d(\delta+1)}
\end{equation}
and, thus, 
\[
\liminf_{\#\mV\to\infty} (1-\nu_2(\mG))\ge \frac{2\sqrt{d-1}}{d}.
\]

Graphs that saturate the asymptotic Alon--Boppana bound and, thus, satisfy an optimal version of~\eqref{eq:cheeger-two-sided}, are especially interesting. We assume throughout that $\mG$ has at least three vertices, to avoid trivialities.

\begin{defi}\label{def:raman}
A connected $d$-regular graph $\mG$ is a \emph{Ramanujan graph} if it is bipartite and
\begin{equation}\label{eg:def-raman-bip}
|1-\nu_2(\mG)|\le  \frac{2\sqrt{d-1}}{d};
\end{equation}
or else, if it is non-bipartite and
\begin{equation}\label{eg:def-raman-non-bip}
\max\{|1-\nu_2(\mG)|,|1-\nu_{\#\mV}(\mG)|\} \le \frac{2\sqrt{d-1}}{d}.
\end{equation}
\end{defi}
In particular, infinite families of Ramanujan graphs with growing number of vertices are examples of expanders.

\begin{rem}\label{rem:eigenv-triv}
The definition of Ramanujan graph distinguishes both cases because $\nu_{\#\mV}(\mG)$ is well-known to be always $2$ if $\mG$ is bipartite, which would trivially contradict the defining bound. The eigenvalues $0,2$ are, indeed, regarded as trivial.
\end{rem}

Ramanujan graphs do exist: indeed, cycle graphs on any number of vertices are Ramanujan. So, can the Ramanujan property hold for regular graphs of arbitrarily high degree?
Yes, it can, as shown by the (non-bipartite) \textit{complete graph} $\mK_{\#\mV}$, for which $\nu_{2}=\nu_{\#\mV}=\frac{\#\mV}{\#\mV-1}$;
and also for the (bipartite) \textit{complete biregular graph} $\mK_{\#\mV,\#\mV}$, for which $\nu_{2}=\nu_{\#\mV-1}=1$: so that, in either case, the condition in \autoref{def:raman} is satisfied for any $\#\mV$. However, for both complete and complete biregular graphs the degree of regularity grows linearly with the number of vertices, which is usually very restrictive.

So, the real breakthrough came when, Alexander Lubotzky, Ralph Phillips and Peter Sarnak proved in \cite[Theorem~4.1]{LubPhiSar88} that for infinitely many pairs $(p,q)$ of primes there is a Ramanujan graph $\mG^{p,q}$ with degree 
\[
d=p+1:
\] they are usually referred to as \emph{LPS expanders}. They were constructed in~\cite[Section~2]{LubPhiSar88} and, independently, in \cite{Mar88}:
\begin{itemize}
\item If $p$ is a quadratic residue modulo $q$, $\left(\frac{p}{q}\right)=1$, then $\mG^{p,q}$ is a Cayley graph over the projective special linear group $\text{PSL}_2(\mathbb{F}_q)$: it is \underline{non-bipartite}, $d$-regular and has 
\[
\#\mV=\frac{q(q^2-1)}{2}
\]
vertices.
\item If  $p$ is a 	non-residue modulo $q$, $\left(\frac{p}{q}\right)=-1$, the graph $\mG^{p,q}$ is constructed over the full projective general linear group $\text{PGL}_2(\mathbb{F}_q)$: it is \underline{bipartite}, $d$-regular and has 
\[
\#\mV={q(q^2-1)}
\]
vertices.
\end{itemize}
For all our purposes in this article, LPS expanders will suffice. Though, let us also mention that, more recently, Adam Marcus, Daniel Spielman and Nikhil Srivastava provided a more flexible construction, proving in~\cite[Theorem~5.5]{MarSpiSri15} and \cite[Theorem~1.1]{MarSpiSri18} the existence of a bipartite Ramanujan graphs of \textit{any} degree $d$ on an arbitrarily large, even number of vertices $\#\mV\ge 2d$: 
\begin{itemize}
\item Starting from any complete bipartite graph $\mK_{d,d}$, which is a Ramanujan graph, the existence of a new bipartite, $d$-regular graph on $2^{k+1}d$ vertices -- a so-called \textit{2-lift} of $\mK_{d,d}$ -- is proven all of whose non-trivial eigenvalues are distant from 1 at most $\frac{2\sqrt{d-1}}{d}$.
\item Alternatively, a model where a $d$-regular bipartite graph (indeed, possibly a multigraph) on any even number of vertices is generated by taking the union of $d$ random perfect matchings across a fixed bipartition exists.
\end{itemize}
We refer to them both as \textit{MSS expanders}. 
In particular, Ramanujan graphs of even degree -- hence, Eulerian -- exist. 

\begin{rem}
The notion of Ramanujan graphs can be extended to biregular graphs and, indeed, \cite[Theorem~5.6]{MarSpiSri15} states the existence of an infinite sequence of $(c, d)$-biregular bipartite Ramanujan graphs for all $c, d \ge 3$; but this seems to be not directly relevant to the purpose of the present article.

Further classes of expanders are surveyed in \cite[Section~6.3]{Chu97} and \cite[Chapter~4]{Kow19}.
\end{rem}

\subsection{Metric graphs}
The \emph{metric graph} $\Graph$ associated with a combinatorial graph $\mG$ is obtained by associating each edge $\me$ of a combinatorial graph $\mG$ with an interval $(0,\ell_\me)$ (see \cite{Mug19} for a more precise, abstract definition), with $\ell_\me \in (0,\infty)$; we call $\Graph$ \emph{unilateral} if $\ell_\me\equiv 1$. The metric graph $\Graph$ is, thus, a metric measure space $(\Graph,\dist_{\Graph},\lambda)$, with respect to the shortest path metric $d$ induced by the edgewise Euclidean distance; and the sum $\lambda$ of edgewise Lebesgue measures.

This defines function spaces $C(\Graph)$ (with respect to the shortest path metric based on the edgewise Euclidean distance) and $L^2(\Graph)$ (with respect to the sum measure) and, therefore, also the Sobolev space
\[
H^1(\Graph):=\{f\in C(\Graph)\cap L^2(\Graph):f'\in L^2(\Graph)\},
\]
which is a Hilbert space for the canonical inner product
\[
(f,g)_{H^1}:=\int_\Graph \left(f'(x)\overline{g'(x)}+ f(x)\overline{g(x)} \right) \dd x,\qquad f,g\in H^1(\Graph),
\]
and its closed subspace
\[
H^1_0(\Graph;\mVD):=\{f\in C(\Graph)\cap L^2(\Graph):f'\in L^2(\Graph)\hbox{ and }f(\mv)=0\hbox{ for all }\mv\in \mVD\},
\]
where $\mVD$ is some given subset $\mVD$ of $\mV$: we refer to \cite{Mug19} for further details. Of course, $H^1_0(\Graph;\mVD)=H^1(\Graph)$ if (and only if) $\mVD=\emptyset$.

The Sobolev seminorm 
\[
a:u\mapsto \int_\Graph |u'|^2\dd x,\qquad u\in H^1_0(\Graph;\mVD),
\]
defines a closed quadratic form with respect to the Hilbert space $H:=L^2(\Graph)$ for any $\mVD$: its associated self-adjoint operator on $L^2(\Graph)$ is  the Laplacian $\DeltaGVD $ on $\Graph$: its domain consists of all functions of class $L^2(\Graph)$ that are edgewise twice weakly differentiable with second weak derivative of class $L^2(\Graph)$, and that satisfy Dirichlet conditions at the vertices in $\mVD$, and standard (i.e., continuity and Kirchhoff) conditions at all other vertices. We use the simpler notation $\Delta_{\Graph}$ if $\mVD=\emptyset$.

By~\cite[Théorème~1.2]{Nic87b}, the metric graph Laplacian  $\DeltaGVD $ is an unbounded operator that has compact (indeed, trace class) resolvent and, hence, pure point spectrum, which we denote by
\[
\lambda_1(\Graph;\mVD)\le \lambda_2(\Graph;\mVD)\le \ldots\le \lambda_n(\Graph;\mVD)\le \ldots\nearrow +\infty.
\]
For the sake of notational simplicity, in the important special case $\mVD=\emptyset$, we write $\lambda_k(\Graph)$ instead of $\lambda_k(\Graph;\mVD)$. Because of the Poincaré inequality for metric graphs, $\lambda_1(\Graph;\mVD)=0$ if and only if $\mVD=\emptyset$, and in this case the corresponding eigenspace  is spanned by the constant functions on $\Graph$. We refer to the smallest strictly positive eigenvalue -- i.e., $\lambda_2(\Graph)$ if $\mVD=\emptyset$ or $\lambda_1(\Graph;\mVD)$ if $\mVD\ne\emptyset$ -- as the \emph{spectral gap} (of $\Graph$ with respect to $\mVD$).

By Courant's minmax principle, all these eigenvalues are associated with the Rayleigh quotient
\[
R(\Graph,u):=\frac{{a}(u)}{\|u\|^2_{L^2(\Graph)}},\qquad u\in H^1_0(\Graph;\mVD),
\]
and in particular there holds
\begin{equation}\label{eq:courantf-cont}
\lambda_2(\Graph)=\min_{\substack{u\in H^1(\Graph) \\ \int_\Graph u\dd x=0}} \frac{{a}(u)}{\|u\|^2_{L^2(\Graph)}}\qquad\hbox{and}\qquad 
\lambda_1(\Graph;\mVD)=\min_{u\in H^1_0(\Graph;\mVD)} \frac{{a}(u)}{\|u\|^2_{L^2(\Graph)}}.
\end{equation}

It follows from \cite[Theorem~4.2]{KenKurMal16} that each unilateral metric graph $\Graph$ satisfies $\lambda_2(\Graph)\le \pi^2$, unless $\Graph$ is a loop (in which case $\lambda_2(\Graph)= 4\pi^2$). Can this upper bound on unilateral metric graphs be improved? It is natural to wonder whether these eigenvalues admit bounds based on the geometry of the metric graph $\Graph$; the most natural such quantities are based on the measure or metric theoretical structure of $\Graph$, including its \emph{volume}
\[
|\Graph|:=\sum_{\me\in\mE}\ell_\me
\]
and its \emph{diameter}
\[
\Diam(\Graph):=\max_{x,y\in \Graph}\dist_\Graph(x,y).
\]

Early fundamental contributions to estimates on $\lambda_k(\Graph)$ were developed in \cite{Nic87,Fri05}, where it was shown that lower bounds in terms of $|\Graph|$ exist.
A more comprehensive study of lower \textit{and} upper bounds on $\lambda_2(\Graph)$ was initiated in~\cite{KenKurMal16}. In particular, both a lower bound and an upper bound on $\lambda_2(\Graph)$ is available in terms of combinations of $|\Graph|$ \textit{and} $\Diam(\Graph)$, \cite[Theorem 7.2]{KenKurMal16} and \cite[Theorem 1.1]{Ken20}.

However, it was observed in \cite[Section 4.2 and Theorem~5.10]{KenKurMal16} that for general (i.e., not necessarily unilateral) metric graphs, an upper bound on $\lambda_2(\Graph)$ only based on either the volume $|\Graph|$ or the diameter $\Diam(\Graph)$ cannot hold: in other words, there exist a sequence of metric graphs $(\Graph_n)_{n\in\N}$ with fixed diameter  such that $\lambda_2(\Graph_n)\to \infty$; and a sequence of metric graphs $(\Graph'_n)_{n\in\N}$ with fixed volume such that $\lambda_2(\Graph'_n)\to \infty$.

\section{Failure of upper bounds}\label{sec:failure}

The eigenvalues of the (normalised) graph Laplacian $L_\mG$ and the metric graph Laplacian $\Delta_{\mG;\mVD}$ are tightly related: The following crucial characterisation is an immediate consequence of a transference principle that was obtained by Joachim von Below in \cite[Theorem, page 320]{Bel85}.

\begin{theo}\label{theo-vonbelow}
Let $\mG$ be a (finite simple, undirected, unweighted, connected) graph, and let $\Graph$ be the associated unilateral metric graph. 

Then for all $0\le \lambda$ such that $\lambda\ne \pi^2k^2$ for any $k\in \N$, $\lambda$ is an eigenvalue of $\Delta_\Graph$ if and only if $1-\cos\sqrt{\lambda}$ is an eigenvalue of $L_\mG$, and in this case the eigenvector of $L_\mG$ is the vector of the evaluation of the eigenfunction of $\Delta_\Graph$ at the vertices.

In particular, if $\Graph$ is not a loop,
\begin{equation}\label{eq:vonbelow-ping}
\lambda_2(\Graph)=\arccos(1-\nu_2(\mG))^2.
\end{equation}
\end{theo}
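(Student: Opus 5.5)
The plan is to reduce the eigenvalue problem on $\Graph$ to a finite system at the vertices by solving the ODE explicitly on each edge. Let $\lambda>0$ with $\sqrt{\lambda}\notin\pi\N$, and write $\omega:=\sqrt\lambda$. On each edge $\me\simeq(0,1)$ from $\mv$ to $\mw$, any solution of $-u''=\lambda u$ is uniquely determined by its endpoint values, since $\sin\omega\neq 0$:
\[
u_\me(x)=\frac{u(\mv)\sin(\omega(1-x))+u(\mw)\sin(\omega x)}{\sin\omega}.
\]
Continuity at the vertices is then built in. The outgoing derivative at $\mv$ is
\[
-u_\me'(0)=\frac{\omega\,(u(\mv)\cos\omega-u(\mw))}{\sin\omega}.
\]
Summing over the edges at $\mv$, the Kirchhoff condition becomes $\deg(\mv)\cos\omega\, u(\mv)=\sum_{\mw\sim\mv}u(\mw)$. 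This is exactly $(D^{-1}A)f=\cos\omega\, f$ for $f:=u|_\mV$, that is, $L_\mG f=(1-\cos\omega)f$.

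For the converse direction, take an eigenvector $f$ of $L_\mG$ with eigenvalue $1-\cos\omega$ and define $u$ edgewise by the interpolation formula above. This $u$ lies in the operator domain (edgewise smooth, continuous, and Kirchhoff by the same computation), so it is an eigenfunction of $\Delta_\Graph$. It is nonzero, because $f\neq0$ and $u$ restricted to $\mV$ equals $f$. The two correspondences $u\mapsto u|_\mV$ and $f\mapsto u$ are mutually inverse and linear. Injectivity of restriction uses $\sin\omega\ne0$: a solution vanishing at both ends of an edge is then zero. So eigenspaces correspond bijectively, multiplicities included. The case $\lambda=0$ is handled separately: both kernels are spanned by constants, and $1-\cos0=0$.

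For \eqref{eq:vonbelow-ping}, I would argue as follows. The map $\lambda\mapsto 1-\cos\sqrt\lambda$ is a strictly increasing bijection from $[0,\pi^2]$ onto $[0,2]$, with inverse $\nu\mapsto\arccos(1-\nu)^2$. Since $0<\nu_2(\mG)\le 2$, set $\lambda^*:=\arccos(1-\nu_2)^2\in(0,\pi^2]$.

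\begin{itemize}
\item If $\lambda^*<\pi^2$, the correspondence shows that $\lambda^*$ is an eigenvalue of $\Delta_\Graph$. Any eigenvalue of $\Delta_\Graph$ in $(0,\lambda^*)$ would produce an eigenvalue of $L_\mG$ in $(0,\nu_2)$, which is impossible. Hence $\lambda_2(\Graph)=\lambda^*$.
\item If $\lambda^*=\pi^2$, then $\nu_2=2$. The same argument shows there is no eigenvalue of $\Delta_\Graph$ in $(0,\pi^2)$, so it remains to see that $\pi^2$ itself is an eigenvalue.
\end{itemize}

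The main obstacle is exactly this excluded value $\lambda=\pi^2$, where the interpolation formula breaks down and the bijection fails. I would treat it using the bound $\lambda_2(\Graph)\le\pi^2$ from \cite[Theorem~4.2]{KenKurMal16}, valid when $\Graph$ is not a loop. In the case $\lambda^*=\pi^2$ this gives $\lambda_2(\Graph)\le\pi^2$, and since $\lambda_2(\Graph)$ is positive and there are no eigenvalues in $(0,\pi^2)$, it must equal $\pi^2$. This closes \eqref{eq:vonbelow-ping}.
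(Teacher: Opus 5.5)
Your proof is correct and follows the same route as the paper. The paper obtains the correspondence by citing von Below's transference principle, which your edgewise sine-interpolation and Kirchhoff computation derive by hand, and it identifies the principal branch of $\arccos$ using the bound $\lambda_2(\Graph)\le\pi^2$ from \cite[Theorem~4.2]{KenKurMal16}, as you do. Your case split also makes explicit that this bound is only needed in the degenerate case $\nu_2(\mG)=2$, where $\arccos(1-\nu_2(\mG))^2=\pi^2$ falls on the excluded set.
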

(Strictly speaking, we identify in~\eqref{eq:vonbelow-ping} the principal branch as $\lambda_2(\Graph)$. This is valid in the present setting  because of the known bound $\lambda_2(\Graph)\le \pi^2$, apart from the case of the loop.)

Combining the definition of Ramanujan graph and the Alon--Boppana bound \eqref{eq:alon-boppana} we find
\[
0\le \frac{2\sqrt{d-1}}{d}-\frac{2\sqrt{d-1}-1}{d(\delta+1)}\le 1-\nu_2(\mG)\le \frac{2\sqrt{d-1}}{d}:
\]
Since $\arccos$ is strictly monotonically decreasing, 
 we immediately obtain the following.

\begin{cor}\label{rem:quantumraman}
Given any Ramanujan graph $\mG$ of degree $d$, the associated unilateral metric graph $\Graph$ satisfies
\begin{equation}\label{eq:low-bound-lambda2}
\arccos\left(\frac{2\sqrt{d-1}}{d}\right)^2\le \lambda_2(\Graph).
\end{equation}
If, additionally,  $\nu_2(\mG)\le 1$, then
\begin{equation}\label{eq:low-bound-lambda2}
\lambda_2(\Graph)\le \arccos (0)^2.
\end{equation}
\end{cor}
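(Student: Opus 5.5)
The plan is to transfer the spectral information on $\nu_2(\mG)$ coming from \autoref{def:raman} to $\lambda_2(\Graph)$ through \autoref{theo-vonbelow}, and then read off both inequalities from the monotonicity of $\arccos$.

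First I dispose of the degenerate case. If $\Graph$ is a loop, then $\mG$ is not simple, so this case is excluded by the standing assumptions. Cycles are Ramanujan graphs with $d=2$; in that case $\frac{2\sqrt{d-1}}{d}=1$, so the lower bound reads $0\le\lambda_2(\Graph)$, which is trivial.

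For a general Ramanujan graph, \eqref{eq:vonbelow-ping} gives $\lambda_2(\Graph)=\arccos(1-\nu_2(\mG))^2$, where $\arccos$ takes values in $[0,\pi]$. In both the bipartite and the non-bipartite case, \autoref{def:raman} yields $1-\nu_2(\mG)\le |1-\nu_2(\mG)|\le \frac{2\sqrt{d-1}}{d}$. Since $\arccos$ is strictly decreasing on $[-1,1]$, this gives
\[
\arccos(1-\nu_2(\mG))\ge \arccos\Bigl(\tfrac{2\sqrt{d-1}}{d}\Bigr)\ge 0 .
\]
Squaring preserves the inequality because both sides are nonnegative, and this proves the first estimate. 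Only the Ramanujan upper bound on $1-\nu_2$ is needed here; the Alon--Boppana lower bound plays no role.

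For the second estimate, the hypothesis $\nu_2(\mG)\le 1$ means $1-\nu_2(\mG)\ge 0$. Monotonicity then gives $\arccos(1-\nu_2(\mG))\le \arccos(0)=\pi/2$, and squaring yields $\lambda_2(\Graph)\le \arccos(0)^2$.

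The hypothesis $\nu_2(\mG)\le 1$ cannot be dropped, since without it the Ramanujan bound also permits $\nu_2(\mG)>1$. For Ramanujan graphs with many vertices, however, it holds automatically by the Alon--Boppana estimate displayed before the corollary.

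The only delicate point is the correct identification of the branch in \eqref{eq:vonbelow-ping}. This is justified by the remark after \autoref{theo-vonbelow} together with the bound $\lambda_2(\Graph)\le\pi^2$ from \cite{KenKurMal16}. Because that bound places $\sqrt{\lambda_2(\Graph)}$ in $[0,\pi]$, it is given by the principal value of $\arccos$, and the monotonicity argument above applies to it. Beyond this, the argument is routine.
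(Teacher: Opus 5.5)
Your proof is correct and follows the paper's route: von Below's formula \eqref{eq:vonbelow-ping}, the Ramanujan bound $1-\nu_2(\mG)\le \frac{2\sqrt{d-1}}{d}$, and the monotonicity of $\arccos$. The paper also displays the Alon--Boppana lower bound on $1-\nu_2(\mG)$, which, as you note, is not needed once $\nu_2(\mG)\le 1$ is assumed; one caveat on your aside, however: ``many vertices'' alone does not force $\nu_2(\mG)\le 1$, since the complete graphs $\mK_n$ are Ramanujan with $\nu_2=\frac{n}{n-1}>1$, so you need $d$ fixed, or more precisely the Alon--Boppana hypothesis (two edges at distance $2\delta+2$).
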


Recall that LPS expanders are family of Ramanujan graphs $\mG^{p,q}$ of degree $d=p+1$ that display a small-world behaviour, with logarithmically growing diameter and girth: this carries over to their unilateral metric versions, too, which we call \emph{Ramanujan metric graphs} and denote by $\Graph^{p,q}$. We likewise refer as \emph{LPS metric expanders} to the sequence of metric graphs whose underlying combinatorial graphs are LPS expanders.
In particular, no upper bound of $\lambda_2(\Graph)$ by a quantity that converges to 0 along a sequence of Ramanujan metric  graphs of fixed degree and increasing vertex number can hold.

\subsection{Volume}

As a warm up, let us sharpen an observation in \cite[Section~4.2]{KenKurMal16}.
The following holds.

\begin{prop}\label{prop:no-volume-bound}
There is \underline{no} universal constant $C>0$ such that
\begin{equation}\label{eq:no-upper-bound-v}
\lambda_2(\Graph)\le \frac{C}{|\Graph|^{2}}
\end{equation}
for all -- either bipartite or non-bipartite --  unilateral metric graphs $\Graph$.
\end{prop}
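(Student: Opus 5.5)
The plan is to argue by contradiction, using a sequence of Ramanujan metric graphs of fixed degree and growing vertex number. Fix a degree $d\ge 3$ and pick a sequence of $d$-regular Ramanujan graphs $(\mG_n)_{n\in\N}$ with $\#\mV_n\to\infty$. We can do this in either setting:
\begin{itemize}
\item for the non-bipartite case, take LPS expanders $\mG^{p,q}$ with $p$ fixed and $q\to\infty$ ranging over primes with $\left(\frac{p}{q}\right)=1$;
\item for the bipartite case, take $\left(\frac{p}{q}\right)=-1$, or alternatively MSS expanders.
\end{itemize}
Let $\Graph_n$ be the associated unilateral metric graphs. Since no $\Graph_n$ is a loop, \autoref{theo-vonbelow} applies, and so does \autoref{rem:quantumraman}.

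The first step is to observe that the volume blows up. Every edge has length $1$ and $\mG_n$ is $d$-regular, so
\[
|\Graph_n|=\#\mE_n=\frac{d\,\#\mV_n}{2}\longrightarrow\infty .
\]

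The second step is a uniform lower bound on the spectral gap. By \autoref{rem:quantumraman},
\[
\lambda_2(\Graph_n)\ge \arccos\left(\frac{2\sqrt{d-1}}{d}\right)^2=:c_d .
\]
Here $c_d>0$, because $d\ge 3$ gives $\frac{2\sqrt{d-1}}{d}<1$. The constant $c_d$ depends only on $d$, not on $n$.

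The third step closes the argument. Suppose \eqref{eq:no-upper-bound-v} held with some constant $C$. Then for all $n$ we would have
\[
0<c_d\le \lambda_2(\Graph_n)\le \frac{C}{|\Graph_n|^2}=\frac{4C}{d^2(\#\mV_n)^2}\longrightarrow 0,
\]
which is a contradiction. Running this separately for the bipartite and non-bipartite sequences covers both classes named in the statement.

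The only point needing care is the use of \autoref{rem:quantumraman}: it requires that $1-\nu_2(\mG_n)\ge 0$, so that $\arccos$ is evaluated on the right branch. The paper obtains this from the Alon--Boppana bound \eqref{eq:alon-boppana}, which applies once $\#\mV_n$ is large, since then the graph contains two edges at large distance. We also need the principal-branch identification in \eqref{eq:vonbelow-ping}. Beyond this there is no real obstacle. The cycles must be excluded because they have $d=2$ and $c_2=\arccos(1)^2=0$. This is also why the statement naturally concerns Ramanujan graphs other than cycles.
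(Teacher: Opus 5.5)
Your proof is correct and is essentially the paper's argument: the volume $|\Graph_n|=d\,\#\mV_n/2$ of Ramanujan metric graphs blows up, while \eqref{eq:low-bound-lambda2} keeps $\lambda_2(\Graph_n)$ bounded away from $0$. The paper also allows varying degrees $d_n\ge 3$, which is harmless since $\arccos\left(\frac{2\sqrt{d-1}}{d}\right)$ increases with $d$, and it remarks that complete graphs already suffice. One small correction to your caveat: the lower bound in \autoref{rem:quantumraman} does not need $1-\nu_2(\mG_n)\ge 0$, because it only uses $-1\le 1-\nu_2(\mG_n)\le \frac{2\sqrt{d-1}}{d}$ and the monotonicity of $\arccos$ on $[-1,1]$; that sign condition matters only for the upper bound stated there.
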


That is, the upper bound in~\eqref{eq:no-upper-bound-v} generally fails even under the condition of unilaterality.
 
\begin{proof}
Consider any sequence $(\mG_n)$ of Ramanujan graphs of degree $d_n\ge 3$ on a strictly increasing number of vertices. Each $\mG_n$ has $\#\mV_n$ vertices and hence, by the Handshaking Lemma, 
$\#\mE_n=\frac{d_n \#\mV_n}{2}\nearrow \infty$ edges. Clearly, this is also the volume $|\Graph_n|$ of the unilateral metric graph $\Graph_n$ associated with $\mG_n$. 
Now, the right-hand side of \eqref{eq:no-upper-bound-v} converges to $0$ as $n$, hence $\#\mV$ tends to $\infty$, whereas the left-hand side remains bounded from below in view of \eqref{eq:low-bound-lambda2}.
\end{proof} 
 
Let us stress that this proof does not rely on the theory of expanders: for example, the sequence $(\mK_n)$ of complete graphs on $n$ vertices suffices to perform the proof: this can also be seen looking at \cite[formula (3.5)]{KenKurMal16}.
 
\subsection{Diameter}
Another classical quantity attached to a graph $\mG$ is the \emph{diameter}, defined as
\[
\Diam(\mG):=\max_{\mv,\mw}\dist_\mG (\mv,\mw).
\]
For general $d$-regular graphs, $d\ge 3$, the diameter can be bounded by
\begin{equation}\label{eq:bound-general}
\log_{d-1}(\#\mV) -\mathcal O(1)\le \Diam(\mG) \le \mathcal O(\#\mV) \qquad \hbox{as }\#\mV\to \infty;
\end{equation}
the lower bound is the classical \emph{Moore bound}, whereas the upper bound is sharp for cycles ($d=2$), cube-connected cycles ($d=3$) and, more generally, cycle of expanders.

\begin{rem}
Expanders satisfy sharper upper bounds: for any $d$,
\begin{equation}\label{eq:bound-diam-chung-sardari}
\frac{4}{3}\log_{d-1}(\#\mV) - {\mathcal O}(1) \le \Diam(\mG^{p,q}) \le  2\log_{d-1}(\#\mV) + {\mathcal O}(1)\qquad\hbox{as }\#\mV\to\infty
\end{equation}
hold for the non-bipartite LPS expanders: the upper bound was remarked in~\cite[Section~1.1]{LubPer16} based on \cite[Corollary~5.5]{ChuFabMan94} (in fact, it holds even for general Ramanujan graphs), whereas the lower bound was proved in~\cite[Theorem~1.2]{Sar19}.
\end{rem}

The following sharpens \cite[Theorem~5.10]{KenKurMal16}, where the existence of a sequence of \textit{possibly non-unilateral} metric graphs producing a counterexample was obtained with a rather involved proof. 

\begin{prop}\label{rem:no-diam-bound}
There is \underline{no} universal constant $C>0$ such that
\[
\lambda_2(\Graph)\le \frac{C}{\Diam(\Graph)^{2}}
\]
for all -- either bipartite or non-bipartite -- unilateral metric graphs $\Graph$.
\end{prop}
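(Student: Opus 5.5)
We argue by contradiction, in the same way as in the proof of \autoref{prop:no-volume-bound}, exhibiting a sequence of unilateral metric graphs along which $\Diam(\Graph_n)\to\infty$ while $\lambda_2(\Graph_n)$ stays bounded away from $0$. Fix a degree $d\ge 3$, say $d=p+1$ for a prime $p$, and take a sequence $(\mG_n)$ of $d$-regular Ramanujan graphs with $\#\mV_n\nearrow\infty$. The LPS expanders $\mG^{p,q}$ with $q\to\infty$ provide such a sequence. To cover both cases in the statement, we may restrict to the subsequence of primes $q$ with $\left(\frac{p}{q}\right)=1$ for non-bipartite graphs, or with $\left(\frac{p}{q}\right)=-1$ for bipartite ones. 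By Dirichlet's theorem on primes in arithmetic progressions, both subsequences are infinite. Alternatively, for the bipartite case one can use MSS expanders of fixed degree $d$.

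The first step is a uniform lower bound on the spectral gap. Because $d$ is fixed, \autoref{rem:quantumraman} gives
\[
\lambda_2(\Graph_n)\ge \arccos\Bigl(\frac{2\sqrt{d-1}}{d}\Bigr)^2=:c_d>0
\]
for all $n$. The constant $c_d$ is indeed positive, since $\frac{2\sqrt{d-1}}{d}<1$ for $d\ge 3$. This is the place where fixing the degree matters: if $d_n$ were allowed to grow, this bound would collapse.

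The second step is to show that the metric diameter diverges. Every vertex-to-vertex distance in the unilateral metric graph $\Graph_n$ equals the corresponding combinatorial distance, so $\Diam(\Graph_n)\ge\Diam(\mG_n)$. The Moore lower bound in \eqref{eq:bound-general} then gives
\[
\Diam(\mG_n)\ge \log_{d-1}(\#\mV_n)-\mathcal O(1)\to\infty .
\]
More elementarily, a connected $d$-regular graph of diameter $D$ has at most $1+d\sum_{j<D}(d-1)^j$ vertices, so a bounded diameter would force a bounded vertex number. It follows that $C/\Diam(\Graph_n)^2\to 0$. Together with $\lambda_2(\Graph_n)\ge c_d$, this contradicts the assumed bound for large $n$, whatever $C$ is.

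We do not expect a real obstacle here. The only points needing care are two. First, the lower bound must be uniform, which is why $d$ is kept fixed rather than taking complete graphs, whose diameter stays equal to $1$. Second, the bipartite and non-bipartite subfamilies must each be infinite; this is handled by the choice of $q$ described above, or by MSS expanders in the bipartite case.
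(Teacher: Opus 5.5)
Your proposal is correct and follows the paper's proof: fixed-degree Ramanujan graphs with growing vertex number, the uniform lower bound from \autoref{rem:quantumraman}, and $\Diam(\Graph_n)\ge\Diam(\mG_n)\ge\log_{d-1}(\#\mV_n)-\mathcal O(1)$ from the Moore bound. Your extra step of producing infinite bipartite and non-bipartite subfamilies (via Dirichlet's theorem, or via MSS expanders) is a refinement the paper leaves implicit; note, though, that the LPS construction requires $p\equiv q\equiv 1 \pmod 4$, so $d=p+1$ cannot be an arbitrary prime plus one.
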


\begin{proof}
Let $d$ be fixed, $d\ge 3$, and let $(\mG_n)$ be a sequence of $d$-regular Ramanujan graphs on a strictly increasing number of vertices. It now suffices to observe that the diameter of any unilateral metric graph $\Graph$ can be trivially bounded from below in terms of the diameter of the associated combinatorial graph. 
In view of \eqref{eq:low-bound-lambda2} and the lower bound in~\eqref{eq:bound-general}, this proves the claim.
\end{proof}

\begin{rem}
Comparing \eqref{eq:bound-general} with \eqref{eq:bound-diam-chung-sardari} shows that LPS expanders do \emph{not} necessarily satisfy much better bounds than general graphs. Cycle graphs, for example, have much higher diameter ($\Diam(\mC_{2n})=\mathcal O(n)$, of course); however, the spectral gap $\lambda_2(\Graph)$ of the corresponding metric graph tends to $0$ as precisely the same rate as $\Diam(\Graph)^2$, and so they are useless for the purpose of proving \autoref{rem:no-diam-bound}.

This explains the appeal of expanders: their crucial feature, see \autoref{rem:quantumraman}, is that they guarantee the existence of an \textit{infinite} sequence of growing graphs that share the same lower bound on the spectral gap, independently of the graph's size, while crucial metric quantities can simultaneously blow up.
\end{rem}

\begin{rem}
The asymptotic bounds in~\eqref{eq:bound-diam-chung-sardari} extend to the \textit{radius} $\rad$ of the non-bipartite LPS expanders, too, due to the classical relation $\rad\le\Diam\le 2\rad$ that holds on any metric space.
\end{rem}

\subsection{Girth}
Given a combinatorial graph $\mG$, and the associated unilateral metric graph $\Graph$, the \emph{girth} $\mGirth(\mG)$ (resp., $\mGirth(\Graph)$) is defined as the length of the shortest cycle in $\mG$  (resp., $\Graph$); by convention, the girth of a tree is 0. LPS expanders have logarithmically growing girth: more precisely,  by~\cite[Remark 4.3.4 and Corollary~4.3.6]{DavSarVal03} for $q\gg p$
\begin{equation}\label{eq:bigbos2}
2+2 \log_{d-1}(\#\mV)\ge \mGirth(\mG^{p,q})\ge 
\begin{cases}
\frac{2}{3}\log_{d-1}(\#\mV)\quad &\hbox{in the non-bipartite case},\\
\frac{4}{3}\log_{d-1}(\#\mV)-\log_{d-1} 4 &\hbox{in the bipartite case},
\end{cases}
\end{equation}
and even
\begin{equation}\label{eq:bigbos2-v}
\mGirth(\mG^{p,q}) \asymp \frac{4}{3}\log_{d-1}(\#\mV)\to \infty \qquad \hbox{as }\#\mV\to \infty
\end{equation}
for bipartite LPS expanders (cf.~\cite[Corollary]{BigBos90}).
Observing that the girth of $\mG$ agrees with the girth of $\Graph$, and reasoning precisely as in the proof of \autoref{rem:no-diam-bound}, the following can be proven: this was observed already in \cite[Example~1.6]{BerKenKur23}.

\begin{prop}\label{rem:no-girth-bound}
There is \underline{no} universal constant $C>0$ such that
\[
\lambda_2(\Graph)\le \frac{C}{\mGirth(\Graph)^{2}}
\]
for all -- either bipartite or non-bipartite --  unilateral metric graphs $\Graph$ with $\mGirth(\Graph)>0$.
\end{prop}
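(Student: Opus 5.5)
We argue by contradiction, along the same lines as the proof of \autoref{rem:no-diam-bound}. Suppose such a $C>0$ exists. Fix a prime $p$, so that $d=p+1\ge 3$ is fixed. Consider the LPS expanders $\mG^{p,q}$ for an infinite sequence of primes $q\to\infty$, all with the same Legendre symbol $\left(\frac{p}{q}\right)$; the construction in \cite{LubPhiSar88} provides infinitely many such $q$ of either type. If both types occur infinitely often, this handles the non-bipartite and the bipartite case separately. Let $\Graph^{p,q}$ be the associated unilateral metric graphs. Since $\#\mV=\frac{q(q^2-1)}{2}$ or $q(q^2-1)$, we have $\#\mV\to\infty$.

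The first step is to check that the girth of $\Graph^{p,q}$ coincides with that of $\mG^{p,q}$. Each edge has length $1$ and $\mG^{p,q}$ is simple, so every cycle in $\Graph^{p,q}$ is a union of whole edges forming a combinatorial cycle, and its length equals the number of its edges. In particular $\mGirth(\Graph^{p,q})>0$, because these graphs are not trees: they are $d$-regular with $d\ge 3$. Next we invoke the lower bound in \eqref{eq:bigbos2}, valid for $q\gg p$. In the non-bipartite case it gives $\mGirth(\Graph^{p,q})\ge\frac23\log_{d-1}(\#\mV)$, and in the bipartite case $\mGirth(\Graph^{p,q})\ge\frac43\log_{d-1}(\#\mV)-\log_{d-1}4$. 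In both cases $\mGirth(\Graph^{p,q})\to\infty$, so $C/\mGirth(\Graph^{p,q})^2\to0$.

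On the other hand, each $\mG^{p,q}$ is Ramanujan of fixed degree $d$. By \autoref{rem:quantumraman} we get $\lambda_2(\Graph^{p,q})\ge\arccos\left(\frac{2\sqrt{d-1}}{d}\right)^2$. This is a positive constant independent of $q$, because $\frac{2\sqrt{d-1}}{d}<1$ for $d\ge3$. Hence the assumed inequality fails once $q$ is large enough, which proves the claim in both the bipartite and the non-bipartite class.

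The only delicate point is the one that distinguishes this from \autoref{rem:no-diam-bound}. The growth of the girth is \emph{not} a consequence of the Ramanujan property; general Ramanujan graphs may contain short cycles. We therefore genuinely need the arithmetic LPS construction and the estimate \eqref{eq:bigbos2} from \cite{DavSarVal03}, together with the infinitude of admissible primes $q$ of each residue type. That infinitude follows from quadratic reciprocity and Dirichlet's theorem.
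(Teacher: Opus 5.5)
Your proof is correct and follows the paper's own argument: the paper notes that $\mGirth(\Graph)=\mGirth(\mG)$ and reasons as in \autoref{rem:no-diam-bound}, combining the logarithmic girth growth \eqref{eq:bigbos2} of fixed-degree LPS expanders (bipartite or non-bipartite) with the uniform lower bound on $\lambda_2$ from \autoref{rem:quantumraman}. You are right that the girth growth has to come from the LPS construction rather than from the Ramanujan property alone; one small correction is that $p$ must be an odd prime ($p\equiv 1 \pmod 4$ in \cite{LubPhiSar88}), so in fact $d\ge 6$.
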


The logically weaker assertion that $\lambda_2(\Graph)$ does not admit a uniform upper bound by the length of the \textit{longest} cycle in $\Graph$ had already been proved in~\cite[Section~5.3]{KenKurMal16}.

\subsection{Mean Distance}
The mean distance
\begin{equation}
\rho(\mG) := \frac{2}{\#\mV(\#\mV-1)} \sum_{\substack{\mv,\mw\in\mV\\ \mv < \mw}} \dist_\mG (\mv,\mw)
\end{equation}
of a combinatorial graph $\mG$ represents the expected shortest-path distance between a uniformly selected pair of distinct vertices (here $\mv<\mw$ means $i<j$ for any enumeration $\mV=\{ \mv_1,\mv_2,\ldots\}$ such that $\mv_i=\mv$ and $\mw=\mv_j$).
It has been elaborated on since~\cite{DoyGra77}, while its interplay with spectral theory has been studied at least since~\cite{Moh91b}. Its natural counterpart 
\begin{equation}
\rho(\Graph) := \frac{1}{|\Graph|^2} \int_\Graph\int_\Graph \dist_\Graph (x,y)\dd x \dd y
\end{equation}
 for metric graphs has been studied in~\cite{GarMarSil23,BapKenMug24}. 
 
  We already know from \autoref{rem:no-diam-bound} that no uniform upper bound on $\lambda_2(\Graph)$ by $\Diam(\Graph)^2$ holds. It is natural to wonder if a (weaker) upper bound on $\lambda_2(\Graph)$ by $\rho(\Graph)^2$, instead, can possibly hold: this is~\cite[Open Problem~3.4]{BapKenMug24}. This was disproved by ChatGPT 5.5 pro, as prompted by Robert Joseph George \cite{Geo26}.
 
\begin{prop}\label{rem:no-meandist-bound}
There is \underline{no} universal constant $C>0$ such that
\[
\lambda_2(\Graph)\le \frac{C}{\rho(\Graph)^{2}}
\]
for all -- either bipartite or non-bipartite --  unilateral metric graphs.
\end{prop}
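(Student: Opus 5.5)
We follow the template of the proofs of \autoref{prop:no-volume-bound} and \autoref{rem:no-diam-bound}. Fix $d\ge 3$ and a sequence $(\mG_n)$ of $d$-regular Ramanujan graphs with $\#\mV_n\to\infty$, for instance LPS expanders with $p$ fixed and $q\to\infty$. By \autoref{rem:quantumraman}, the associated unilateral metric graphs satisfy $\lambda_2(\Graph_n)\ge \arccos\bigl(\frac{2\sqrt{d-1}}{d}\bigr)^2>0$, uniformly in $n$. It therefore suffices to show that $\rho(\Graph_n)\to\infty$: then $C/\rho(\Graph_n)^2\to 0$, which contradicts the uniform lower bound for any fixed $C$.

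The first step reduces the metric mean distance to the combinatorial one. For $x$ on edge $\me$ and $y$ on edge $\me'$, with endpoints $\mv\in\me$ and $\mw\in\me'$, the triangle inequality gives $\dist_\Graph(x,y)\ge \dist_\mG(\mv,\mw)-2$, since each point is within distance $1$ of each endpoint of its edge. We integrate over $\Graph\times\Graph$ and use $|\Graph_n|=\#\mE_n=\frac{d\#\mV_n}{2}$. Each vertex is an endpoint of exactly $d$ edges, so averaging over endpoints turns the double integral into a uniform average over pairs of vertices. This yields
\[
\rho(\Graph_n)\ \ge\ \frac{1}{(\#\mV_n)^2}\sum_{\mv,\mw\in\mV_n}\dist_{\mG_n}(\mv,\mw)\;-\;2 .
\]

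The second step, and the main point, is a lower bound on the combinatorial mean distance that grows with $\#\mV_n$. We use a Moore-type counting argument. In a $d$-regular graph, the ball of radius $r$ around any vertex contains at most $1+d\sum_{j=0}^{r-1}(d-1)^j\le 3(d-1)^r$ vertices. Choose $r=\lfloor\frac12\log_{d-1}\#\mV_n\rfloor$. Then for each $\mv$, at most $3\sqrt{\#\mV_n}=o(\#\mV_n)$ vertices $\mw$ satisfy $\dist(\mv,\mw)<r$, so
\[
\frac{1}{(\#\mV_n)^2}\sum_{\mv,\mw}\dist(\mv,\mw)\ \ge\ r\Bigl(1-\frac{3}{\sqrt{\#\mV_n}}\Bigr).
\]
The right-hand side tends to $\infty$ like $\frac12\log_{d-1}\#\mV_n$, so $\rho(\Graph_n)\to\infty$, which proves the claim.

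The only delicate point is the bookkeeping in the comparison between $\rho(\Graph)$ and the vertex average, namely the factors of $d$ and the additive constant. Because the error is merely $O(1)$ while the main term diverges logarithmically, this needs no care beyond the estimate above. As with \autoref{prop:no-volume-bound}, expansion is used only to obtain the uniform lower bound on $\lambda_2$; the growth of $\rho$ holds for any bounded-degree sequence. Both the bipartite and the non-bipartite cases are covered, by choosing LPS expanders with $p$ a quadratic non-residue, respectively a residue, modulo $q$.
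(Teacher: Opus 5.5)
Your proof is correct and follows the paper's scheme. You combine the uniform Ramanujan lower bound on $\lambda_2$ from \autoref{rem:quantumraman}, the comparison $\rho(\Graph)\ge \frac{1}{\#\mV^2}\sum_{\mv,\mw}\dist_\mG(\mv,\mw)-2$ obtained by averaging over edge endpoints (this is \autoref{lem:rho-g-gg}, which the paper phrases probabilistically), and divergence of the combinatorial mean distance. The one difference is that the paper cites \cite[Formula~(5)]{Shi20} for $\rho(\mG)\ge\log_{d-1}\#\mV-\mathcal O(1)$, whereas you prove the weaker but sufficient growth $\frac12\log_{d-1}\#\mV$ with a self-contained Moore-bound count, the same device the paper uses later in \autoref{prop:positive-density-large-inradius}.
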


We emphasise that \autoref{rem:no-meandist-bound} logically implies \autoref{rem:no-diam-bound}, since $\rho(\Graph)<\Diam(\Graph)$, by~\cite[Theorem~3.1]{BapKenMug24}

\begin{proof}
We follow the usual scheme: we let $d$ be fixed, $d\ge 3$, and let $(\mG_n)$ be a sequence of $d$-regular Ramanujan graphs on a strictly increasing number of vertices.

We then invoke the lower bound
\begin{equation}\label{eq:meandist-lower}
\rho(\mG)\ge  \log_{d-1}(\#\mV)-\mathcal O(1)\to \infty\qquad \hbox{as }\#\mV\to \infty
\end{equation}
that holds for any $d$-regular graph $\mG$ with $d\ge 3$, see \cite[Formula (5)]{Shi20}. In view of \eqref{eq:low-bound-lambda2}, it suffices to show that the mean distance of any unilateral metric graph $\Graph$ can be bounded from below in terms of the mean distance of the associated combinatorial graph, and then take a sequence of Ramanujan graphs. The only missing step is to show a lower bound of $\rho(\Graph)$ by $\rho(\mG)$:  this is a consequence of \autoref{lem:rho-g-gg} below. The claim is thus proved.
\end{proof}

While it is known that 
\[
\rho(\Graph)\ge \rho(\mG)
\]
is generally false for unilateral metric graphs and their underlying combinatorial graphs, see \cite[Remark~2.2]{GarMarSil23}, a lower estimate
\[
\rho(\Graph)\ge \rho(\mG)-\mathcal{O}(1)
\]
holds: here comes the missing step in \autoref{rem:no-meandist-bound} suggested by ChatGPT.

\begin{lemma}\label{lem:rho-g-gg}
Let $\mG$ be $d$-regular and let
$\Graph$ be the corresponding unilateral metric graph. 
Then
\[
\left|\rho(\Graph)- \rho(\mG)\right| \le  3.
\]
\end{lemma}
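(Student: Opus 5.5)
The plan is to compare the double integral defining $\rho(\Graph)$ with the double sum defining $\rho(\mG)$ by decomposing $\Graph\times\Graph$ into the product cells $\me\times\me'$, $\me,\me'\in\mE$. Since $\Graph$ is unilateral and $d$-regular, $|\Graph|=\#\mE=\frac{d\#\mV}{2}$, and
\[
\rho(\Graph)=\frac{1}{\#\mE^2}\sum_{\me,\me'\in\mE}\int_{\me}\int_{\me'}\dist_\Graph(x,y)\dd x\dd y .
\]
The first step is a pointwise comparison. For $x\in\me=\{\mv_1,\mv_2\}$ and $y\in\me'=\{\mw_1,\mw_2\}$, any shortest path from $x$ to $y$ either stays within a single edge (when $\me=\me'$) or leaves through an endpoint of $\me$ and enters through an endpoint of $\me'$. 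So $\dist_\Graph(x,y)=\min_{i,j}\bigl(\dist_\Graph(x,\mv_i)+\dist_\mG(\mv_i,\mw_j)+\dist_\Graph(\mw_j,y)\bigr)$ whenever $\me\ne\me'$. Moreover, $\dist_\mG(\mv_i,\mw_j)$ differs from $\dist_\mG(\mv_1,\mw_1)$ by at most $2$. Hence, for every choice of endpoints $\mv\in\me$ and $\mw\in\me'$,
\[
\bigl|\dist_\Graph(x,y)-\dist_\mG(\mv,\mw)\bigr|\le 3 .
\]
This holds because the two edge-parameter terms contribute at most $1$ in total after choosing the optimal endpoints, and the endpoint change contributes at most $2$. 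I expect the constant to be tunable; a cruder estimate still gives a bound of order $3$. The diagonal cells $\me=\me'$ trivially satisfy the same bound.

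The second step averages this over the cells. I assign to each cell $\me\times\me'$ the quantity $\frac14\sum_{\mv\in\me,\mw\in\me'}\dist_\mG(\mv,\mw)$. Summing over all ordered pairs of edges, each ordered pair of vertices $(\mv,\mw)$ is counted exactly $\deg(\mv)\deg(\mw)=d^2$ times. Therefore
\[
\frac{1}{\#\mE^2}\sum_{\me,\me'}\frac14\sum_{\mv\in\me,\mw\in\me'}\dist_\mG(\mv,\mw)=\frac{d^2}{4\#\mE^2}\sum_{\mv,\mw\in\mV}\dist_\mG(\mv,\mw)=\frac{1}{\#\mV^2}\sum_{\mv,\mw}\dist_\mG(\mv,\mw).
\]
This is where $d$-regularity is essential: the degree weights become uniform, so the edge-based average equals the vertex average. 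Combining this with the pointwise bound gives $\bigl|\rho(\Graph)-\frac{\#\mV-1}{\#\mV}\rho(\mG)\bigr|\le 3$ up to the constant from the first step.

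The last step is to handle the normalisation discrepancy. The quantity $\rho(\mG)$ averages over distinct pairs, whereas the double sum above includes the diagonal. The difference is $\frac{1}{\#\mV}\rho(\mG)\le\frac{\Diam(\mG)}{\#\mV}$, which is at most $1$ for connected graphs because $\Diam(\mG)\le\#\mV-1$. This error must be absorbed into the constant $3$. I expect the main obstacle to be bookkeeping this constant: the pointwise error plus the diagonal error must total at most $3$.

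To get the constant, I would sharpen the first step by averaging rather than taking the supremum. For $\me\ne\me'$, the average over the four endpoint pairs of $\dist_\mG(\mv,\mw)$ approximates the cell average of $\dist_\Graph$ within $1$ plus $\frac12$. The reason is that $\int_0^1\int_0^1\min(s+t,\ldots)$ is bounded by $1$ from above, while the endpoint spread is at most $1$ on average, since adjacent vertices have distances differing by at most $1$. This pointwise bound of roughly $2$, together with the diagonal correction of at most $1$, yields the stated bound of $3$.
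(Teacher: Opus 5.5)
Your architecture is the paper's. Weighting the four endpoint pairs of each cell $\me\times\me'$ by $\frac14$ is exactly the paper's random choice of endpoints $U,V$. Your $d^2$-count is its proof that $U,V$ are independent and uniform on $\mV$, and your diagonal correction $\frac{1}{\#\mV}\rho(\mG)\le\frac{\Diam(\mG)}{\#\mV}<1$ is its final line. Your second and third steps are correct.

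The gap is the pointwise comparison in your first step. The claim that, after choosing the optimal endpoints, the two edge-parameter terms total at most $1$ is false. Take any geodesic $\mv_2\mv_1\cdots\mw_1\mw_2$ of $\mG$, put $x\in\me=\{\mv_1,\mv_2\}$ at arclength $s$ from $\mv_1$, and put $y\in\me'=\{\mw_1,\mw_2\}$ at arclength $t$ from $\mw_1$. Then $\dist_\Graph(x,y)=\dist_\mG(\mv_1,\mw_1)+s+t$, the unique minimising pair for $s,t<1$ is $(\mv_1,\mw_1)$, and the edge terms $s+t$ approach $2$. So the route ``optimal pair, then change of endpoints'' only certifies $2+2=4$ pointwise. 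Even your claimed $3$ plus the diagonal $1$ gives $4$, as you noticed yourself. The repair in your last paragraph is not yet a proof, for three reasons. The constants do not match the reasons given (``$1$ plus $\frac12$'' versus ``roughly $2$''). Only the upper side $\dist_\Graph(x,y)\le s+t+\dist_\mG(\mv_1,\mw_1)$ is addressed. And ``roughly $2$'' plus $1$ does not certify $\le 3$.

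The missing observation is the one the paper uses, and it makes any analysis of optimal endpoints unnecessary. In a unilateral graph every point of an edge lies within distance $1$ of \emph{each} of its endpoints. So the triangle inequality gives $|\dist_\Graph(x,y)-\dist_\mG(\mv,\mw)|\le\dist_\Graph(x,\mv)+\dist_\Graph(y,\mw)\le 2$ for all $x\in\me$, $y\in\me'$ and every choice of $\mv\in\me$, $\mw\in\me'$, diagonal cells included. Integrating this against your second-step identity yields $\bigl|\rho(\Graph)-\frac{\#\mV-1}{\#\mV}\rho(\mG)\bigr|\le 2$, and your third step then gives exactly $3$.

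Your averaging instinct in fact buys more. Average the same triangle inequality over the four endpoint pairs and use $\dist_\Graph(x,\mv_1)+\dist_\Graph(x,\mv_2)\le 1$ (and likewise for $y$). This gives the pointwise bound $\bigl|\dist_\Graph(x,y)-\frac14\sum_{i,j}\dist_\mG(\mv_i,\mw_j)\bigr|\le 1$, hence $|\rho(\Graph)-\rho(\mG)|<2$.
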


\begin{proof}
Choose $X,Y$ independently and uniformly at random with respect to the measure on $\Graph$; by definition of $\rho$,
\begin{equation}\label{eq:step0}
\mathbb E\bigl[\dist_{\Graph}(X,Y)\bigr] = \rho(\Graph).
\end{equation}

Let $\me_X$ and $\me_Y$ be the  edges of $\Graph$ containing $X$
and $Y$, respectively.
Choose $U$ to be one
of the two endpoints of $\me_X$, each with probability $\frac12 $, and likewise
choose $V$ to be one of the two endpoints of $\me_Y$, each with probability
$\frac12 $.

The edge $\me_X$ containing $X\in \Graph$ is any given edge $\me$ with probability $\frac{1}{\#\mE}$. Conditioned on $X\in \me$, the endpoint $U$ equals each endpoint of $\me$ with probability $\frac12 $, so for a fixed vertex $\mv\in \mV$,
\[
\mathbb P(U=\mv) = \sum_{\me\ni \mv} \frac{1}{2\#\mE}= \frac{\deg(\mv)}{2\#\mE} = \frac{d}{2\#\mE}  = \frac{1}{\#\mV},
\]
by the Handshaking Lemma.
Thus $U$ is uniformly
distributed on $\mV$, and by the same argument so is $V$; since $X$, $Y$,
and the two independent endpoint choices are mutually independent, $U$ and
$V$ are independent.

Because $\Graph$ is unilateral, any point of an edge is
within distance $1$ of either endpoint of that edge; in particular
$\dist_\Graph(X,U)\le 1$ and $\dist_\Graph(Y,V)\le 1$.
Moreover, since $U,V\in \mV$, their distance in $\mG$ agrees with their distance in $\Graph$.

The triangle inequality on $\Graph$ therefore gives, for every realization
of $X,Y,U,V$,
\begin{equation}\label{eq:chat-step2}
|\dist_\Graph(U,V) -\dist_\Graph(X,Y)|
\le \dist_\Graph(U,X)  + \dist_\Graph(Y,V)
\le 1 +  1 .
\end{equation}

Since $U,V$ are independent and uniform on $\mV$, taking expectations in \eqref{eq:chat-step2} we see that
\[
\mathbb E\bigl[\dist_\Graph(U,V)\bigr]
= \frac{1}{\#\mV^2}\sum_{\mathsf{u},\mv\in \mV} \dist_\Graph(\mathsf{u},\mv) = \frac{\#\mV-1}{\#\mV}\rho(\mG).
\]
 and using \eqref{eq:step0}, as well as the triangle inequality,
\[
\left| \frac{\#\mV-1}{\#\mV}\rho(\mG) - \rho(\Graph) \right|	\le 2,
\]
which finally yields the claim, since $\rho(\mG)\le \Diam(\mG)< \#\mV$.
\end{proof}

\subsection{Combination of quantities and lower bounds}

We can obtain the following asymptotic behavior for the lowest positive eigenvalue of metric expanders.

\begin{lemma}\label{rem:quantumraman-diam}
Let $d\in \N$ be fixed, $d\ge 3$. Then the non-bipartite LPS metric expanders $(\Graph^{p,q})$ of degree $d\equiv p+1$ satisfy
\begin{equation}\label{eq:upplow-bound-lambda2-ultimate}
 \lambda_2(\Graph^{p,q})\to  \arccos\left(\frac{2\sqrt{d-1}}{d}\right)^2\qquad \hbox{as }\#\mV\to\infty.
\end{equation}
\end{lemma}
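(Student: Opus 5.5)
The plan is to squeeze $1-\nu_2(\mG^{p,q})$ between two quantities that both tend to $\frac{2\sqrt{d-1}}{d}$, and then pass to the metric graph through \autoref{theo-vonbelow}. Since $\mG^{p,q}$ is Ramanujan, \eqref{eg:def-raman-non-bip} gives the upper bound
\[
1-\nu_2(\mG^{p,q})\le \frac{2\sqrt{d-1}}{d}.
\]
For the lower bound we use the Alon--Boppana estimate \eqref{eq:alon-boppana}, which needs a parameter $\delta=\delta(q)\to\infty$ such that $\mG^{p,q}$ contains two edges at distance $2\delta+2$.

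To obtain such a $\delta$, we use that the diameter of the non-bipartite LPS expanders tends to infinity. The lower bound in \eqref{eq:bound-diam-chung-sardari} gives this, and so does the Moore bound in \eqref{eq:bound-general}, because $\#\mV=\frac{q(q^2-1)}{2}\to\infty$ while $d$ is fixed. Pick two vertices at distance $\Diam(\mG^{p,q})$ and an edge incident to each. These edges are at distance at least $\Diam(\mG^{p,q})-2$, and since distances along a shortest path take every intermediate value, we can choose edges at distance exactly $2\delta+2$ with $\delta\ge \lfloor(\Diam(\mG^{p,q})-4)/2\rfloor$. The one point to check is the convention for distance between edges in \cite{Nil91}. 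Whatever convention is used, it changes $\delta$ only by a bounded amount, and it is enough that $\delta\to\infty$. With this choice, \eqref{eq:alon-boppana} yields
\[
\frac{2\sqrt{d-1}}{d}-\frac{2\sqrt{d-1}-1}{d(\delta+1)}\le 1-\nu_2(\mG^{p,q})\le \frac{2\sqrt{d-1}}{d},
\]
so $1-\nu_2(\mG^{p,q})\to \frac{2\sqrt{d-1}}{d}$ as $\#\mV\to\infty$.

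Next we apply \eqref{eq:vonbelow-ping}, which is valid since $\Graph^{p,q}$ is not a loop when $d\ge3$: $\lambda_2(\Graph^{p,q})=\arccos(1-\nu_2(\mG^{p,q}))^2$. The function $\arccos$ is continuous on $[-1,1]$, and the limit $\frac{2\sqrt{d-1}}{d}$ lies in $(0,1)$ for $d\ge 3$. Continuity of $t\mapsto\arccos(t)^2$ then gives \eqref{eq:upplow-bound-lambda2-ultimate}.

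The main obstacle is the bookkeeping around $\delta$: matching Nilli's hypothesis on the distance between two edges to the graph diameter, and making sure $\delta\to\infty$ along the chosen subsequence of admissible primes $q$ with $\left(\frac{p}{q}\right)=1$. Non-bipartiteness itself is needed only to invoke \eqref{eg:def-raman-non-bip}, but it also guarantees that the principal-branch identification in \autoref{theo-vonbelow} applies to the second eigenvalue.
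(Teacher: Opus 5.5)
Your proposal is correct and follows the paper's proof. Both arguments sandwich $1-\nu_2(\mG^{p,q})$ between the Ramanujan bound and the Alon--Boppana bound, with $\delta$ of order $\Diam(\mG^{p,q})/2\to\infty$, and then transfer to the metric graph via \eqref{eq:vonbelow-ping} and monotonicity/continuity of $\arccos$. Your bookkeeping of $\delta$ is in fact more careful than the paper's, and your use of the Moore bound shows that neither the LPS-specific diameter estimate nor non-bipartiteness is needed: the bipartite condition \eqref{eg:def-raman-bip} gives the same upper bound on $1-\nu_2$.
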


\begin{proof}
Given any Ramanujan graph $\mG$ of degree $d$ and diameter $\Diam(\mG)$ large enough, the associated unilateral metric graph $\Graph$ satisfies
\begin{equation}\label{eq:upplow-bound-lambda2}
\arccos\left(\frac{2\sqrt{d-1}}{d}\right)^2\le \lambda_2(\Graph)\le \arccos\left(\frac{2\sqrt{d-1}}{d}-\frac{2\sqrt{d-1}-1}{d\lfloor \frac{\Diam(\mG)}{2}\rfloor}\right)^2,
\end{equation}
where the lower bound is \eqref{eq:low-bound-lambda2} and the upper bound follows from the Alon--Boppana bound \eqref{eq:alon-boppana} and the monotonicity of $\arccos$. The claim now follows 
combining \eqref{eq:bound-diam-chung-sardari} (which holds for non-bipartite LPS expanders) with \eqref{eq:upplow-bound-lambda2} .
\end{proof}

This can be used to extend our previously obtained result to fractions of metric quantities. 
In fact, \autoref{rem:quantumraman-diam} suggests that expanders may be used to prove the failure of lower and upper bounds alike. 
Observe that by elementary scaling arguments, upper or lower bounds may only be achievable by quantities that are $(-2)$-homogeneous.
For this reason, we are going to consider quantities of the form
\[
\frac{\Phi_1(\Graph)^s}{\Phi_2(\Graph)^{2+s}},\qquad s\in \mathbb R,
\]
where $\Phi_1,\Phi_2$ are any of the quantities we have considered in the previous sections. If $s=0$, then we know already that there is no universal constant $C>0$ such that
\[
\lambda_2(\Graph)\le C\frac{1}{\Phi_2(\Graph)^{2}}
\]
for all unilateral metric graphs if $\Phi_2(\Graph)=|\Graph|$ (\autoref{prop:no-volume-bound}), $\Phi_2(\Graph)=\Diam(\Graph)$ (\autoref{rem:no-diam-bound}), 
$\Phi_2(\Graph)=\mGirth(\Graph)$ (\autoref{rem:no-girth-bound}), $\Phi_2(\Graph) =\rho(\Graph)$ (\autoref{rem:no-meandist-bound}).

We can state the following, which complements some results in~\cite[Section~4.2]{BapKenKra26}.

\begin{theo}\label{cor:lowerb}
The following assertions hold.
\begin{enumerate}[(1)]
\item For any $\alpha>-2$ and any positive, continuous, $\alpha$-homogeneous function $F_{\alpha}:(0,\infty)^3\to \R$ there is \underline{no} universal constant $C > 0$ such that 
\[
\lambda_2(\Graph) \le C\; \frac{F_\alpha(\Diam(\Graph), \rho(\Graph), \mGirth(\Graph))}{|\Graph|^{2+\alpha}}
\]
for all  -- either bipartite or non-bipartite -- unilateral metric graphs with $\mGirth(\Graph)>0$.
\item For any positive, continuous,  $(-2)$-homogeneous function $F_{-2}:(0,\infty)^3\to \R$ there is \underline{no} universal constant $C > 0$ such that 
\[
\lambda_2(\Graph) \le C\; {F_{-2}(\Diam(\Graph), \rho(\Graph), \mGirth(\Graph))}
\]
for all -- either bipartite or non-bipartite --  unilateral metric graphs with $\mGirth(\Graph)>0$.
\item For any $\alpha>0$ and any  positive, continuous, $(2+\alpha)$-homogeneous function $F_{2+\alpha}:(0,\infty)^3\to \R$ there is \underline{no} universal constant $c > 0$ such that 
\[
\lambda_2(\Graph) \ge c\; \frac{|\Graph|^{\alpha}}{F_{2+\alpha}(\Diam(\Graph), \rho(\Graph), \mGirth(\Graph))}
\]
for all -- either bipartite or non-bipartite --  unilateral metric graphs with $\mGirth(\Graph)>0$.
\end{enumerate}

%
\end{theo}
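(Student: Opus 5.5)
Fix $d=p+1\ge 3$ and take the LPS metric expanders $\Graph_n:=\Graph^{p,q_n}$ with $q_n\to\infty$ (non-bipartite or bipartite, as the statement requires). Write $N_n:=\#\mV_n$. All three assertions follow by comparing growth rates along this sequence.

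\textbf{Growth of the metric quantities.} By the Handshaking Lemma, $|\Graph_n|=dN_n/2$, which grows linearly in $N_n$. The three arguments of $F$ are all of logarithmic order $\log_{d-1}N_n$, up to constants bounded away from $0$ and $\infty$:
\begin{itemize}
\item for the girth, this is \eqref{eq:bigbos2};
\item for the diameter, the lower bound comes from \eqref{eq:bound-general}, and the upper bound $2\log_{d-1}N_n+\mathcal O(1)$ holds for general Ramanujan graphs; the metric diameter exceeds the combinatorial one by at most $1$;
\item for the mean distance, the lower bound is \eqref{eq:meandist-lower} combined with \autoref{lem:rho-g-gg}, and the upper bound is $\rho(\Graph)\le\Diam(\Graph)$.
\end{itemize}

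\textbf{Reducing $F$ to a power of $\log N_n$.} Set $L_n:=\log_{d-1}N_n$. The triple $(\Diam,\rho,\mGirth)(\Graph_n)/L_n$ then lies in a fixed compact set $K\subset(0,\infty)^3$ for all large $n$. Since $F_\beta$ is positive and continuous, it satisfies $0<m\le F_\beta\le M<\infty$ on $K$. By $\beta$-homogeneity,
\[
F_\beta(\Diam(\Graph_n),\rho(\Graph_n),\mGirth(\Graph_n))=L_n^{\beta}\,F_\beta(\cdot/L_n)\asymp L_n^\beta .
\]
This compactness step is the only real point of the proof. Everything hinges on having two-sided logarithmic bounds for all three quantities, in particular the upper bounds on $\Diam$ and $\rho$ and the lower bound on $\mGirth$. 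These are exactly why the statement restricts to LPS graphs rather than arbitrary Ramanujan graphs.

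\textbf{The spectral gap.} By \autoref{rem:quantumraman}, $\lambda_2(\Graph_n)\ge \arccos(2\sqrt{d-1}/d)^2>0$. Conversely, $\lambda_2(\Graph_n)\le\pi^2$ by \cite{KenKurMal16}, since these graphs are not loops. Hence $\lambda_2(\Graph_n)$ stays bounded away from both $0$ and $\infty$, and the full limit in \autoref{rem:quantumraman-diam} is not needed.

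\textbf{The three assertions.}
\begin{enumerate}[(1)]
\item The right-hand side is $\asymp L_n^\alpha/N_n^{2+\alpha}$. Since $2+\alpha>0$, the polynomial decay of $N_n^{-(2+\alpha)}$ dominates any power of $\log N_n$, so the right-hand side tends to $0$, while $\lambda_2(\Graph_n)$ stays bounded below. This contradicts the inequality for every $C$.
\item The right-hand side is $\asymp L_n^{-2}\to0$. This contradicts the inequality in the same way.
\item The right-hand side is $\asymp N_n^\alpha/L_n^{2+\alpha}$. Since $\alpha>0$, this tends to $\infty$, while $\lambda_2(\Graph_n)\le\pi^2$. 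No $c>0$ can therefore work.
\end{enumerate}
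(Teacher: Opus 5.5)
Your proposal is correct and follows essentially the same route as the paper: fix $d=p+1$ and run along LPS metric expanders, where $|\Graph|\asymp\#\mV$ while $\Diam$, $\rho$ and $\mGirth$ are all $\asymp\log_{d-1}\#\mV$, and where $\lambda_2$ stays bounded below by $\arccos(2\sqrt{d-1}/d)^2$ and above by a constant. Your compactness-plus-homogeneity step makes rigorous the paper's unexplained ``$\lesssim$'' (it is genuinely needed, since $F$ may blow up near the boundary of $(0,\infty)^3$), and using $\lambda_2\le\pi^2$ from \cite{KenKurMal16} in place of the $\arccos(0)^2$ bound of \autoref{rem:quantumraman} is immaterial.
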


\begin{proof}
For any $d$-regular graph $\mG$ on $\#\mV$ vertices and the corresponding unilateral metric graph $\Graph$ we have 
\[
|\Graph|=\frac{d\#\mV}{2}.
\]
On the other hand, we have seen that, for fixed $d=p+1$, all $d$-regular Ramanujan metric graphs $\Graph^{p,q}$ satisfy
\[
\Diam(\Graph^{p,q}), \rho(\Graph^{p,q}) \asymp \log_{d-1}(\#\mV)\qquad \hbox{as }\#\mV\to\infty;
\]
and, likewise, LPS expanders satisfy
\[
\mGirth(\Graph^{p,q}) \asymp \log_{d-1}(\#\mV)\qquad \hbox{as }\#\mV\to\infty.
\]

Also, we know that for all $d$-regular graphs $\mG$ and the associated unilateral metric graph, and up to a correction term that does not depend on $\mG,\Graph$,
\[
\Diam(\Graph)\approx \Diam(\mG),\qquad 
\rho(\Graph)\approx \rho(\mG),\qquad 
\mGirth(\Graph)= \mGirth(\mG),\qquad 
\]

We fix $d$, hence $p$ and see that along any LPS metric expander
{
\[
\begin{split}
\frac{F_\alpha(\Diam(\Graph^{p,q}), \rho(\Graph^{p,q}), \mGirth(\Graph^{p,q}))}{|\Graph^{p,q}|^{2+\alpha}}&\lesssim \frac{ \log_{d-1}(\#\mV)^\alpha}{ \#\mV^{2+\alpha}} \to 0,\\
{F_{-2}(\Diam(\Graph^{p,q}), \rho(\Graph^{p,q}), \mGirth(\Graph^{p,q}))}&\lesssim  \log_{d-1}(\#\mV)^{-2} \to 0,\\
\end{split}
\qquad \hbox{as }\#\mV\to\infty
\]
in the cases (1) and (2), respectively.
}
In view of \autoref{rem:quantumraman}, this yields the claim.

The case (3) can be studied likewise, observing that along any LPS metric expander
\[
\frac{|\Graph^{p,q}|^{\alpha}}{F_{2+\alpha}(\Diam(\Graph^{p,q}), \rho(\Graph^{p,q}), \mGirth(\Graph^{p,q}))}\gtrsim \frac{ \#\mV^{\alpha}}{ \log_{d-1}(\#\mV)^{2+\alpha}} \to \infty,\qquad\hbox{as }\#\mV\to\infty.
\]
By  \autoref{rem:quantumraman}, this yields the claim.
\end{proof}

\begin{rem}
(1) The claim in \autoref{cor:lowerb} does not contradict with \cite[Théorème 3.1]{Nic87} (existence of a lower bound by $\frac{1}{|\Graph|^2}$), \cite[Theorem~7.1]{KenKurMal16} (existence of an upper bound by $\frac{4|\Graph|-3\Diam(\Graph)}{\Diam(\Graph)^3}$, \cite[Theorem~7.2]{KenKurMal16} (existence of a lower bound by $\frac{1}{2\Diam(\Graph)|\Graph|}$), 
 \cite[Proposition~1.8 and Theorem 1.9]{BerKenKur23} (existence of an upper bound by $\frac{|\Graph|}{\mGirth(\Graph)^3}$ and $\frac{|\Graph|}{\Diam(\Graph)^3}$). Also, it includes the observation in \cite[Example 1.10.(2)]{BerKenKur23} (failure of an upper bound by $\frac{\Diam(\Graph)}{\mGirth(\Graph)^3}$) as a special case.

(2) However, there are bounds that are known to fail -- for instance, the lower bound by $\frac{1}{\Diam(\Graph)^2}$ -- but whose failure does not seem to be explainable by the techniques we used to prove \autoref{cor:lowerb}. For instance, \cite[Example~5.1]{KenKurMal16} aims at keeping a metric quantity (the diameter, in that case) constant along a sequence of metric graphs of increasing volume, whose mean degree grows linearly in the volume while the spectral gap tends to 0.  Indeed, the strategy of the current paper is opposite, as it searches for a sequence of regular graphs with spectral gap bounded below, while the diameter blows up.
\end{rem}

\begin{rem}
There are two different notions of \emph{triameter}:
\begin{itemize}
\item In discrete graph theory, the triameter was introduced in \cite{Das21} as
\[
\Tri(\mG):=\max_{\mv,\mw,\mz}\left(\dist_{\mG}(\mv,\mw)+\dist_{\mG}(\mw,\mz)+\dist_{\mG}(\mz,\mv)\right)
\]
\item In metric graph theory, the {triameter} was introduced in~\cite{BerKenKur23} as
\[
\Tri'(\Graph):=\max_{x,y,z}\min_{\substack{x\ne y\\ y\ne z\\ z\ne x}}\{\dist_\Graph(x,y),\dist_\Graph(y,z),\dist_\Graph(z,x)\}
\]
\end{itemize}
Both notions immediately extend to the other environment, i.e., one can define $\Tri(\Graph)$ and $\Tri'(\mG)$.

It is known that $2\Diam (\mG)\le \Tri(\mG)\le 3\Diam(\mG)$ for any $\mG$, hence $\Tri(\mG)\asymp  \log_{d-1}(\#\mV)$ along expanders. Also, one sees that $\Tri(\Graph)\ge \Tri(\mG)$. This shows that we can extend to $\Tri$ all results we have obtained for $\Diam$ in this article. 

Things are less clear for $\Tri'$, even though it is immediate that $\Tri'(\Graph)\le \Diam(\Graph)$ and $\Tri'(\Graph)\le \Tri(\Graph)$. Indeed, $\Tri'$ is a finer quantity that is strongly affected by the local features of the graph. Deriving its asymptotic behaviour along expanders does not seem to be totally trivial.
\end{rem}

\section{Dirichlet boundary conditions}\label{sec:dirichlet}

Von Below's formula can be extended to metric graph Laplacians $\DeltaGVD $ satisfying Dirichlet conditions on vertex set $\mVD$.
The following characterisation was obtained by Serge Nicaise in \cite[Theorem~3.1]{Nic85}.

\begin{theo}\label{theo:vonbnic}
Let $\Graph$ be a unilateral metric graph, and let 
$\emptyset\ne \mVD\subsetneq \mV$. Then for all $0< \lambda$ such that $\lambda\ne \pi^2k^2$ for any $k\in \N$, $\lambda$ is an eigenvalue of $\DeltaGVD $ if and only if $1-\cos\sqrt{\lambda}$ is an eigenvalue of $\LGVD$.

In particular, 
\begin{equation}\label{eq:nicvonb}
\lambda_1(\Graph;\mVD)=\arccos(1-\nu_1(\mG;\mVD))^2.
\end{equation}
\end{theo}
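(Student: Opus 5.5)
The plan is to reduce the eigenvalue problem for $\DeltaGVD$ to a linear system on the vertex values, exactly as in von Below's argument for \autoref{theo-vonbelow}. Here $\LGVD$ is the principal submatrix of $L_\mG$ indexed by $\mV\setminus\mVD$: the normalised Laplacian acting on functions that are set to $0$ on $\mVD$, with the equation tested only at the vertices in $\mV\setminus\mVD$.

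\emph{Step 1: forward direction.} Fix $\lambda>0$ with $\sqrt\lambda\notin\pi\N$, so $\sin\sqrt\lambda\ne0$. Parametrise each edge $\me=\{\mv,\mw\}$ by $x\in[0,1]$ with $x=0$ at $\mv$.
\begin{itemize}
\item Any solution of $-u''=\lambda u$ on $\me$ is then determined by its endpoint values:
\[
u(x)=\frac{u(\mv)\sin(\sqrt\lambda(1-x))+u(\mw)\sin(\sqrt\lambda x)}{\sin\sqrt\lambda}.
\]
\item Its outgoing derivative at $\mv$ is $\sqrt\lambda\,\bigl(u(\mw)-u(\mv)\cos\sqrt\lambda\bigr)/\sin\sqrt\lambda$.
\item Summing over the edges at a non-Dirichlet vertex $\mv$, the Kirchhoff condition becomes
\[
\sum_{\mw\sim\mv}u(\mw)-\deg(\mv)\cos\sqrt\lambda\,u(\mv)=0,
\]
where $u(\mw)=0$ for $\mw\in\mVD$.
\item Dividing by $\deg(\mv)$ gives $\LGVD f=(1-\cos\sqrt\lambda)f$ with $f:=u|_{\mV\setminus\mVD}$.
\end{itemize}
It remains to check $f\neq0$. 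If $u$ vanished at every vertex, the formula above would force $u\equiv0$ on every edge. So a nonzero eigenfunction yields an eigenvector $f$ of $\LGVD$.

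\emph{Step 2: converse.} Let $f$ be an eigenvector of $\LGVD$ with eigenvalue $1-\cos\sqrt\lambda$.
\begin{itemize}
\item Extend $f$ by zero to $\mVD$ and define $u$ edgewise by the interpolation formula of Step 1.
\item Then $u$ is continuous, vanishes on $\mVD$, and solves $-u''=\lambda u$ on each edge.
\item The computation of Step 1, read backwards, shows that Kirchhoff's condition holds at each vertex of $\mV\setminus\mVD$. Hence $u\in D(\DeltaGVD)$ is an eigenfunction.
\end{itemize}
Steps 1 and 2 together give the equivalence, together with the identification of the eigenvector with the vertex evaluations of the eigenfunction.

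\emph{Step 3: the formula \eqref{eq:nicvonb}.} This is where I expect the main (mild) obstacle: one must ensure that $\lambda_1(\Graph;\mVD)$ avoids the exceptional values $\pi^2k^2$ and lies on the principal branch. The strategy is to show $0<\lambda_1(\Graph;\mVD)<\pi^2$.
\begin{itemize}
\item Positivity holds by the Poincaré inequality, since $\mVD\ne\emptyset$.
\item For the upper bound, connectedness and $\mVD\subsetneq\mV$ give an edge $\me$ joining some $\mv\in\mVD$ to some $\mw\notin\mVD$.
\item Take the test function $u=\sin(\pi x)$ on $\me$ and $u=0$ elsewhere. It lies in $H^1_0(\Graph;\mVD)$ and has Rayleigh quotient $\pi^2$, so $\lambda_1(\Graph;\mVD)\le\pi^2$ by \eqref{eq:courantf-cont}.
\item This $u$ is not an eigenfunction: at $\mw$ its derivatives sum to $\pm\pi\ne0$, so Kirchhoff fails. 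Since equality in the variational characterisation forces the minimiser to be an eigenfunction, the inequality is strict.
\end{itemize}
On $(0,\pi^2)$ the map $\lambda\mapsto1-\cos\sqrt\lambda$ is a strictly increasing bijection onto $(0,2)$. Every eigenvalue of $\LGVD$ is positive, because $\LGVD$ is positive definite, being a Dirichlet restriction on a connected graph.

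Finally, compare the bottoms of the two spectra. The smallest eigenvalue $\nu_1(\mG;\mVD)$ of $\LGVD$ lies in $(0,2)$: indeed, $1-\cos\sqrt{\lambda_1(\Graph;\mVD)}$ is an eigenvalue of $\LGVD$ in $(0,2)$, so $\nu_1(\mG;\mVD)<2$. By Step 2, $\nu_1(\mG;\mVD)$ therefore corresponds to an eigenvalue of $\DeltaGVD$ in $(0,\pi^2)$. Conversely, by Step 1, $\lambda_1(\Graph;\mVD)$ corresponds to an eigenvalue of $\LGVD$ in $(0,2)$. Monotonicity of the correspondence then forces $\lambda_1(\Graph;\mVD)=\arccos(1-\nu_1(\mG;\mVD))^2$.
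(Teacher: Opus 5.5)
Your proof is correct and follows the standard von Below--Nicaise edgewise reduction, which is exactly the argument behind \cite[Theorem~3.1]{Nic85}: write the interpolation formula on each edge, then turn the Kirchhoff condition into the eigenvalue equation for the principal submatrix $\LGVD$. The paper itself gives no proof and simply cites that result, though your Step~3 usefully adds the missing piece: the test function $\sin(\pi x)$ on an edge joining $\mVD$ to $\mV\setminus\mVD$, together with the failure of Kirchhoff at the non-Dirichlet endpoint, gives $0<\lambda_1(\Graph;\mVD)<\pi^2$ and so justifies the principal branch in \eqref{eq:nicvonb}, which the paper leaves implicit in the Dirichlet case (in the Neumann case it appeals to $\lambda_2(\Graph)\le\pi^2$).
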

Here $\LGVD$ is the normalised Laplacian with Dirichlet boundary conditions at $\mVD\subset \mV$: it can be described as follows. Let 
\[
\widetilde{\mV}:=\mV\setminus\mVD,
\]
and let
\[
E:\C^{\widetilde{\mV}}\to \C^{\mV}
\]
be the canonical extension operator by 0.
Now, consider the self-adjoint operator $\LGVD$ on $\ell^2_{\deg}(\mV)$ that is associated with the quadratic form $\mathfrak{a}_\mVD$ given by
\[
\mathfrak{a}_\mVD(f):=\sum_{\me\in \mE}|\nabla E f(\me)|^2,\qquad f:\widetilde{\mV}\to \C,
\]
where we use the notation from \autoref{sec:comblapl}: in particular, $\LGVD=E^*L_\mG E$. Accordingly, and in view of Courant's minmax principle, the lowest eigenvalue of $\LGVD$ is
\begin{equation}\label{eq:courantf-disc-dirich}
\nu_1(\mG;\mVD)=\min_{\substack{f:\tilde{\mV}\to\C}} \frac{\mathfrak{a}_\mVD(f)}{\|f\|^2_{\ell^2_{\deg}(\mV)}}.
\end{equation}

\begin{rem}\label{cor:bifmug-nu1}
A simple lower bound for $\nu_1(\mG;\mVD)$ is the following counterpart of Fiedler's classical bound on $\nu_2(\mG)$ for Dirichlet conditions:
\begin{equation*}\label{eq:courantf-disc-dirich-fiedl}
\nu_1(\mG;\mVD)\ge 2\left(1-\cos\left(\frac{\pi}{2(\#\mV-\#\mVD)-1}\right)\right)
\end{equation*}
It was obtained in \cite[Corollary~5.12]{BifMug25}. 
We may use it to deduce the following from \autoref{theo:vonbnic}:

Let $\mG$ be a graph. Then for the associated unilateral metric graph $\Graph$ there holds
\[
\lambda_1(\Graph;\mVD)\ge \frac{\pi^2}{(2\#\mV-2\#\mVD+1)^2}\qquad\hbox{for any }\emptyset\ne \mVD\subsetneq \mV.
\]
However, this is not really useful for our purposes, since it does not prevent $\lambda_1(\Graph;\mVD)$ from vanishing as $\#\mV$ tend to $\infty$, apart in the trivial case of $\#\mV-\#\mVD={\mathcal O}(1)$.
\end{rem}

The following is an alternative lower bound that is often more convenient for the scope of this paper.

\begin{lemma}\label{lem:perpendic}
If $\mG$ is a $d$-regular, non-bipartite Ramanujan graph, then there holds
\begin{equation*}
\nu_1(\mG;\mVD) 
\ge \left(1 - \frac{2\sqrt{d-1}}{d}\right)
\frac{\#\mVD}{\#\mV}
\qquad\hbox{for any }\emptyset\ne \mVD\subsetneq \mV.
\end{equation*}
\end{lemma}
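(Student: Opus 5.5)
We argue through the Rayleigh quotient \eqref{eq:courantf-disc-dirich} together with an orthogonal decomposition of $Ef$ against the constants. Fix $f:\widetilde{\mV}\to\C$ and set $g:=Ef:\mV\to\C$, so that $g$ vanishes on $\mVD$ and $\mathfrak{a}_\mVD(f)=\mathfrak{a}(g)$. Since $\mG$ is $d$-regular, $\ell^2_{\deg}$ is $d$ times the plain $\ell^2$ inner product (up to the normalisation convention, which cancels in the quotient). Write $g=c\mathbf 1+h$ with $c:=\frac{1}{\#\mV}\sum_{\mv}g(\mv)$ and $\sum_\mv h(\mv)=0$. Then $\mathfrak{a}(g)=\mathfrak{a}(h)$, because constants lie in the kernel of the form. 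By \eqref{eq:courantf-disc} and the Ramanujan condition $\nu_2(\mG)\ge 1-\frac{2\sqrt{d-1}}{d}=:\gamma$, we obtain $\mathfrak{a}(g)\ge \gamma\|h\|^2_{\ell^2_{\deg}}$. Only the lower half of the Ramanujan window is used, so non-bipartiteness is not essential, although it is harmless.

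It therefore suffices to show that
\[
\|h\|^2_{\ell^2_{\deg}}\ge \frac{\#\mVD}{\#\mV}\,\|g\|^2_{\ell^2_{\deg}}.
\]
By Pythagoras, $\|g\|^2=|c|^2\|\mathbf 1\|^2+\|h\|^2$, so the inequality is equivalent to $|c|^2\|\mathbf 1\|^2\le \frac{\#\widetilde{\mV}}{\#\mV}\|g\|^2$. Working in plain $\ell^2$ and using that $g$ is supported on $\widetilde{\mV}$, Cauchy--Schwarz gives
\[
|c|^2\#\mV=\frac{\bigl|\sum_{\mv\in\widetilde{\mV}}g(\mv)\bigr|^2}{\#\mV}\le \frac{\#\widetilde{\mV}}{\#\mV}\sum_{\mv}|g(\mv)|^2,
\]
which is exactly what is needed. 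Combining the two estimates yields $\mathfrak{a}_\mVD(f)\ge \gamma\frac{\#\mVD}{\#\mV}\|f\|^2_{\ell^2_{\deg}}$ for every $f$, and minimising over $f$ proves the claim.

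None of the steps is hard. The only points needing care are, first, that the degree weight in $\ell^2_{\deg}$ is constant, so the orthogonal projection onto the constants in $\ell^2_{\deg}$ coincides with the plain average used above; and second, that the discrete variational characterisation \eqref{eq:courantf-disc} applies to $h$, which has zero weighted mean because the weights are constant.
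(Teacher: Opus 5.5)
Your proof is correct and follows essentially the same route as the paper: both split $Ef$ into its mean and its mean-zero part, apply $\nu_2(\mG)\ge 1-\frac{2\sqrt{d-1}}{d}$ to the latter, and bound the mean-zero part's norm below by $\frac{\#\mVD}{\#\mV}\|Ef\|^2$; the paper does this last step by computing the lowest eigenvalue of $\Id_{\#\widetilde{\mV}}-\frac{1}{\#\mV}J_{\widetilde{\mV}}$, which is exactly your Cauchy--Schwarz estimate. Your remark that non-bipartiteness is not needed here is also right, since the paper uses it only to obtain the two-sided bound \eqref{eq:auule}, whose upper half is needed later in \autoref{rem:upper-raman-nu1} and in the proof of \autoref{rem:no-polya-szego-improved}, not in this lemma.
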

\begin{proof}
To begin with, let us recall the following well-known fact of spectral graph theory: $0$ is always an eigenvalue of $L_\mG$, with $2$ being an eigenvalue if and only if $\mG$ is bipartite. The corresponding Perron eigenvector is a multiple of $\mathbf{1}$.

For any $u:\widetilde{\mV}\to \C$, we decompose $Eu := u_\parallel + u_\perp$, where $E$ is the canonical extension operator introduced above and 
\[
u_\parallel=\frac{1}{\#\mV}J_\mV Eu=\frac{1}{\#\mV}\sum_{\mv\in\mV}Eu(\mv)\cdot \mathbf{1}
\]
is the projection of $Eu$ onto the $\ker(L_\mG)$ and, consequently, 
\begin{equation}\label{eq:euper-formula}
u_\perp:=\left(\Id_\mV-\frac{1}{\#\mV}J_\mV\right) Eu;
\end{equation}
here, $J_{\mV}$ is the $\#{\mV} \times \#{\mV}$ all-one matrix. 
A direct computation now shows that the norm of $u_\perp$ satisfies
\begin{equation}\label{eq:norm-uperp}
\|u_\perp\|^2 = \left(
 \left( \Id_{\#{\widetilde{\mV}}} - \frac{1}{\#\mV}J_{\widetilde{\mV}} \right) u,u\right)
\end{equation}
where $J_{\widetilde{\mV}}$ is the $\#\widetilde{\mV} \times \#\widetilde{\mV}$ all-one matrix, where we use the fact that $\Id_\mV-\frac{1}{\#\mV}J_\mV$ is an orthogonal projector.


%

By construction, $u_\perp$ is orthogonal to the Perron eigenvector $u_\parallel$,
hence $u_\perp$ lies in the span of all eigenspaces corresponding to non-trivial eigenvalues (this is where the assumption that $\mG$ is non-bipartite is used; cf.~\autoref{rem:eigenv-triv}).
It follows that 
\[
\left|1-\frac{(L_\mG u_\perp,u_\perp)}{\|u_\perp\|^2}\right|\le  \max_{i=2,\ldots,\#\mV} |1-\nu_i(\mG)|,
\]
and therefore, invoking the defining property of Ramanujan graphs,
\begin{equation}\label{eq:auule}
\left|1-\frac{(L_\mG u_\perp,u_\perp)}{\|u_\perp\|^2}\right|\le 
 \frac{2\sqrt{d-1}}{d}.
\end{equation}

%

Now, the normalised Laplacian $L_\mG$ satisfies
\[
(L_\mG Eu,Eu)=(L_\mG u_\parallel,u_\parallel)+2(L_\mG u_\parallel, u_\perp)+(L_\mG u_\perp,u_\perp)=(L_\mG u_\perp,u_\perp)
\]
and by \eqref{eq:auule}
\begin{equation}\label{eq:ineq-hermit-matrices}
\left(1+\frac{2\sqrt{d-1}}{d}\right) \|u_\perp\|^2\ge (L_\mG u_\perp,u_\perp)\ge \left(1-\frac{2\sqrt{d-1}}{d}\right) \|u_\perp\|^2
\end{equation}

Because $\LGVD=E^* L_\mG E$, we conclude that 
\begin{equation}\label{eq:domination-hermit-matrix}
(\LGVD u,u)\ge \left(1-\frac{2\sqrt{d-1}}{d}\right)\left(
 \left( \Id_{\#{\widetilde{\mV}}} - \frac{1}{\#\mV}J_{\widetilde{\mV}} \right) u,u\right).
\end{equation}
Now the lower bound follows immediately, since $\frac{\#\mVD}{\#\mV}$ is the lowest eigenvalue of $ \left( \Id_{\#{\widetilde{\mV}}} - \frac{1}{\#\mV}J_{\widetilde{\mV}} 
\right)$.
\end{proof}

\begin{rem}\label{rem:upper-raman-nu1}
Again by \eqref{eq:auule}, the same computation yields
\begin{equation}\label{eq:domination-hermit-matrix-bis}
(\LGVD u,u)\le \left(1+\frac{2\sqrt{d-1}}{d}\right)\left(
 \left( \Id_{\#{\widetilde{\mV}}} - \frac{1}{\#\mV}J_{\widetilde{\mV}} \right) u,u\right)\qquad \hbox{for all }u:\widetilde{\mV}\to\C
\end{equation}
and, in particular,
\begin{equation}\label{eq:domination-hermit-matrix-eig-ter}
\nu_1(\mG;\mVD)\le \nu_{\#\widetilde{\mV}}(\mG;\mVD)\le 1+\frac{2\sqrt{d-1}}{d},
\end{equation}
because the largest eigenvalue of $\Id_{\#{\widetilde{\mV}}} - \frac{1}{\#\mV}J_{\widetilde{\mV}}$ is always $1$. Indeed, the $i$-th eigenvalue of $\Id_{\#{\widetilde{\mV}}} - \frac{1}{\#\mV}J_{\widetilde{\mV}}$ is $1-\frac{1}{\#\mV}{\mu_i}(J_{\widetilde{\mV}})$, where $\mu_i(J_{\widetilde{\mV}})$ is the $i$-th eigenvalue of $J_{\widetilde{\mV}}$; and the lowest eigenvalue of $J_{\widetilde{\mV}}$ is 0, with multiplicity $\#\widetilde{\mV}-1$.
\end{rem}

In view of \eqref{eq:nicvonb} and the elementary inequality 
\[
\arccos(1-x)^2 \ge 2x+\frac{x^3}{72}\qquad\hbox{for all }x \in [0,1],
\] 
combining \eqref{eq:nicvonb} and  \autoref{lem:perpendic} we can finally state the following.

\begin{cor}\label{cor:lambda1-ramanu}
If $\mG$ is a $d$-regular, non-bipartite Ramanujan graph, then for the associated unilateral metric graph $\Graph$ there holds
\begin{equation}\label{eq:eigenvaluelowerbound}
\lambda_1(\Graph;\mVD)  \ge 2\left(1 - \frac{2\sqrt{d-1}}{d}\right)\frac{\#{\mVD}}{\#\mV}\qquad\hbox{for any }\emptyset\ne \mVD\subsetneq \mV.
\end{equation}
\end{cor}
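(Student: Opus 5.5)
The plan is to chain three ingredients stated just before the corollary: Nicaise's formula \eqref{eq:nicvonb}, the discrete lower bound of \autoref{lem:perpendic}, and the elementary inequality $\arccos(1-x)^2\ge 2x+\frac{x^3}{72}$ on $[0,1]$. Throughout, write $x:=\nu_1(\mG;\mVD)$ and $c_d:=\left(1-\frac{2\sqrt{d-1}}{d}\right)\frac{\#\mVD}{\#\mV}$.

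\textbf{Step 1: $x$ lies in the admissible range.} By \autoref{lem:perpendic}, $x\ge c_d$. Note that $c_d>0$ because $2\sqrt{d-1}<d$ for $d\ge3$; at $d=2$ the bound degenerates to $0$ and the claim is trivial. Also $c_d\le 1$. For the upper side, \autoref{rem:upper-raman-nu1} only gives $x\le 1+\frac{2\sqrt{d-1}}{d}$, which may exceed $1$. So I split into two cases.

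\textbf{Step 2: the case $x\in[0,1]$.} Nicaise's formula \eqref{eq:nicvonb} gives $\lambda_1(\Graph;\mVD)=\arccos(1-x)^2$. The elementary inequality then yields $\lambda_1\ge 2x\ge 2c_d$. Alternatively, since $\arccos(1-\cdot)^2$ is increasing, one can evaluate directly at $c_d\le 1$: $\lambda_1\ge\arccos(1-c_d)^2\ge 2c_d$.

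\textbf{Step 3: the case $x>1$.} By the same monotonicity, $\lambda_1=\arccos(1-x)^2\ge\arccos(0)^2=\pi^2/4\ge 2\ge 2c_d$.

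\textbf{Obstacle: validity of Nicaise's identity.} The one delicate point is whether \eqref{eq:nicvonb} really holds with the principal branch of $\arccos$. This requires $\lambda_1(\Graph;\mVD)\le \pi^2$ and $\lambda_1\neq \pi^2k^2$. I would argue as follows. The branch $\sqrt{\lambda}=\arccos(1-x)\in(0,\pi)$ corresponds to a genuine eigenvalue whenever $x\in(0,2)$. Moreover, any eigenfunction at an eigenvalue below $\pi^2$ is determined by its vertex values, which form a discrete eigenvector, so no eigenvalue of $\DeltaGVD$ below this one can be missed. That the identity holds is asserted by the theorem as stated, so I would simply invoke it. With that settled, the two cases in Steps 2 and 3 together give \eqref{eq:eigenvaluelowerbound}.
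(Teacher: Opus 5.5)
Your proof is correct and follows the paper's route: Nicaise's identity \eqref{eq:nicvonb}, the discrete bound of \autoref{lem:perpendic}, and $\arccos(1-x)^2\ge 2x$ on $[0,1]$. Your Step~3 is harmless but vacuous: $\LGVD=\Id-\frac1d A_{\widetilde{\mV}}$, where $A_{\widetilde{\mV}}$ is the adjacency matrix of the subgraph induced on $\widetilde{\mV}$, and this matrix has zero trace, so its largest eigenvalue is nonnegative and $\nu_1(\mG;\mVD)\le 1$ always (and in any case, your monotonicity variant in Step~2 already covers every $x\ge c_d$ without a case split).
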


This lower bound is akin to \eqref{eq:low-bound-lambda2} and will analogously help us prove the failure of certain upper bounds.

\subsection{Torsional rigidity}\label{sec:torsional}
The \emph{torsion function} of a metric graph $\Graph$ with Dirichlet boundary conditions at $\mVD$ is the solution of the Poisson problem
\[
-\DeltaGVD u(x)=1,\qquad x\in \Graph,
\]
where $u$ necessarily satisfies Dirichlet conditions at $\mVD$ by construction of $\DeltaGVD$; 
the \emph{torsional rigidity} $T(\Graph;\mVD)$ is then the $L^1$-norm of the torsion function. Remarkably,
\[
T(\Graph;\mVD)=\max_{u\in H^1_0(\Graph;\mVD)} \frac{\|u\|^2_{L^1(\Graph)}}{{a}(u)}.
\]

Several bounds on $T(\Graph;\mVD)$ are known, including the \emph{Pólya--Szeg\H{o} inequality}
\begin{equation}\label{eq:polyasz}
\lambda_1(\Graph;\mVD)T(\Graph;\mVD)<|\Graph|.
\end{equation}
that was proved in~\cite[Proposition~5.1]{MugPlu23}.
The proof is based on an apparently lossy estimate, and so it was speculated in \cite[Example~5.4]{MugPlu23} that this estimate may remain true upon replacing the right-hand side of \eqref{eq:polyasz} by $\frac{\pi^2}{12}|\Graph|$. Let us disprove this conjecture.

\begin{theo}\label{rem:no-polya-szego-improved}
Let $\emptyset\ne \mVD\subsetneq \mV$. 
Then there is \underline{no} universal constant $C<1$ such that
\[
\lambda_1(\Graph;\mVD)T(\Graph;\mVD)\le C |\Graph|.
\]
for all unilateral metric graphs.
\end{theo}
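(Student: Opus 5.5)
The plan is to exhibit a sequence of unilateral metric graphs $\Graph_n$ with a single Dirichlet vertex, $\mVD_n=\{\mv_n\}$, along which
\[
\frac{\lambda_1(\Graph_n;\mVD_n)\,T(\Graph_n;\mVD_n)}{|\Graph_n|}\to 1 .
\]
Combined with \eqref{eq:polyasz}, this shows that no $C<1$ can work. For the $\mG_n$ I would take non-bipartite LPS expanders $\mG^{p,q}$ with $p$ (hence $d=p+1$) fixed and $q\to\infty$. The heuristic is that on an expander a single Dirichlet point is ``invisible'': the first Dirichlet eigenfunction is almost constant in $L^2$, and for an almost constant test function the variational characterisation of $T$ is almost saturated by the eigenfunction.

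\textbf{Step 1: a lower bound on $\lambda_1 T$.} Let $\varphi_n$ be the positive first eigenfunction of $\Delta_{\Graph_n;\mVD_n}$. Inserting it into the variational formula for $T$ gives
\[
T\ge \frac{\|\varphi_n\|_{L^1}^2}{a(\varphi_n)}=\frac{\|\varphi_n\|_{L^1}^2}{\lambda_1\|\varphi_n\|_{L^2}^2},
\qquad\hbox{hence}\qquad
\frac{\lambda_1 T}{|\Graph_n|}\ge \frac{(\int\varphi_n)^2}{|\Graph_n|\,\|\varphi_n\|^2_{L^2}} .
\]
Write $\varphi_n=m_n+\psi_n$, where $m_n$ is the mean of $\varphi_n$ and $\int\psi_n=0$. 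Then the right-hand side equals $m_n^2|\Graph_n|/(m_n^2|\Graph_n|+\|\psi_n\|^2)$. It therefore suffices to show
\[
\frac{\|\psi_n\|^2}{\|\varphi_n\|^2}\to 0 .
\]

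\textbf{Step 2: control of the oscillating part.} Since $\psi_n'=\varphi_n'$ and $\psi_n$ is admissible in \eqref{eq:courantf-cont} for $\lambda_2$, we have
\[
\lambda_2(\Graph_n)\|\psi_n\|^2\le a(\varphi_n)=\lambda_1(\Graph_n;\mVD_n)\|\varphi_n\|^2 .
\]
By \autoref{rem:quantumraman}, $\lambda_2(\Graph_n)\ge \arccos(2\sqrt{d-1}/d)^2>0$ uniformly in $n$. Hence everything reduces to proving
\[
\lambda_1(\Graph_n;\mVD_n)\to 0 .
\]

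\textbf{Step 3: $\lambda_1\to 0$ via a test function.} Take $u(x)=\min\{\dist_{\Graph_n}(x,\mv_n),1\}$. This function lies in $H^1_0(\Graph_n;\mVD_n)$ and is supported in a non-trivial way only on the $d$ edges at $\mv_n$: there $u$ is linear with $a$-contribution $d$ and $L^2$-contribution $d/3$, and $u\equiv1$ elsewhere. Therefore
\[
\lambda_1\le \frac{d}{|\Graph_n|-d+d/3}=\mathcal O\!\left(\frac{1}{\#\mV_n}\right)\to0 ,
\]
because $|\Graph_n|=d\#\mV_n/2\to\infty$. Alternatively, one can use \autoref{rem:upper-raman-nu1} together with \autoref{theo:vonbnic}.

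\textbf{Main obstacle.} The only delicate point is the uniform spectral gap in Step 2, and this is exactly where expanders are needed. On a growing family without a uniform gap, such as cycles, $\psi_n$ need not be small relative to $\varphi_n$. The Ramanujan lower bound from \autoref{rem:quantumraman} supplies the gap; \autoref{cor:lambda1-ramanu} is not needed, only the upper bound on $\lambda_1$. I would also check that the strict inequality \eqref{eq:polyasz} is compatible with the ratio approaching $1$ only in the limit, and note that bipartite Ramanujan graphs would work equally well here, since Step 2 uses only $\lambda_2$.
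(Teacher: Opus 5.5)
Your proof is correct, and it takes a genuinely different and simpler route than the paper.

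\textbf{The paper's route.} The paper works entirely on the discrete side. It uses the explicit formula \eqref{eq:discretetorsion} for $T(\Graph;\mVD)$ and sandwiches $\LGVD$ between $\bigl(1\mp\frac{2\sqrt{d-1}}{d}\bigr)\bigl(\Id-\frac{1}{\#\mV}J_{\widetilde{\mV}}\bigr)$; this is where non-bipartiteness enters. It then bounds $T$ and $\lambda_1$ from below \emph{separately}, via \eqref{eq:torsionlowerbound} and \autoref{cor:lambda1-ramanu}. The two Ramanujan constants do not cancel, so at fixed $d$ this only gives the bound $\frac{d-2\sqrt{d-1}}{d+2\sqrt{d-1}}<1$. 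That is why the paper needs the double limit $p,q\to\infty$.

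\textbf{Your route.} You test the variational characterisation of $T$ with the Dirichlet ground state itself and control its oscillation with the Neumann gap. This yields
\[
\frac{\lambda_1(\Graph;\mVD)\,T(\Graph;\mVD)}{|\Graph|}\ge 1-\frac{\lambda_1(\Graph;\mVD)}{\lambda_2(\Graph)}
\]
on \emph{every} metric graph.

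\textbf{What each approach buys.} Your argument loses nothing in the constants, so a single limit $q\to\infty$ at fixed $d$ suffices, with rate $1-\mathcal O(\#\mV^{-1})$. You need neither the torsion formula, nor \autoref{lem:perpendic}, nor non-bipartiteness, so you also get the statement within the class of bipartite graphs. Your reduction even shows that expanders are not essential here. For complete graphs $\mK_n$ one has $\lambda_2(\Graph)=\arccos\bigl(-\frac{1}{n-1}\bigr)^2>\frac{\pi^2}{4}$, while your test function gives $\lambda_1(\Graph;\{\mv\})\le\bigl(\frac n2-\frac23\bigr)^{-1}$. In exchange, the paper's route yields the explicit lower bound \eqref{eq:torsionlowerbound} for Dirichlet sets of arbitrary size.

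\textbf{A small caveat.} Your alternative to Step 3 via \autoref{rem:upper-raman-nu1} does not work as literally stated. The eigenvalue bound $\nu_1(\mG;\mVD)\le 1+\frac{2\sqrt{d-1}}{d}$ recorded there does not tend to $0$. You would have to apply min--max to the form inequality \eqref{eq:domination-hermit-matrix-bis} to get $\nu_1(\mG;\mVD)\le\bigl(1+\frac{2\sqrt{d-1}}{d}\bigr)\frac{\#\mVD}{\#\mV}$, and that is available only in the non-bipartite case. Your main test-function argument makes this irrelevant.
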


\begin{proof}
Throughout this proof, $\Graph$ will be $d$-regular unilateral metric graphs constructed upon a non-bipartite LPS expander, $\mathcal G=\mathcal{G}^{p,q}$. 

We introduce the torsional rigidity $T(\mG;\mVD)$ of $\mG$ as the $\ell^1$-norm of the torsion function of $\mG$, i.e., of the unique solution $u$ of
\[
\LGVD u=\mathbf{1}.
\]
It is known that the torsional rigidity for $\Graph$ and $\mG$ are related by a very explicit formula: the Handshaking Lemma yields $|\Graph|=\frac{d\#\mV}{2}$ and, therefore, the formula in~\cite[Theorem~3.9]{MugPlu23} simplifies to 
\begin{equation}\label{eq:discretetorsion}
T(\Graph;\mVD)= \frac{d\#\mV}{24}+\frac{d}{4}\left(\LGVD^{-1}\mathbf{1}_{\widetilde{\mV}},\mathbf{1}_{\widetilde{\mV}}\right).
\end{equation}

Let us search for a lower bound of \eqref{eq:discretetorsion}:
Combining \eqref{eq:norm-uperp} and \eqref{eq:ineq-hermit-matrices} (here we use non-bipartiteness of the LPS expander), we see that
\begin{equation*}
\LGVD \le \left(1 + \frac{2\sqrt{d-1}}{d}\right)\left(\Id_{\#{\widetilde{\mV}}} - \frac{1}{\#\mV}J_{\widetilde{\mV}}\right)
\end{equation*}
in the sense of Hermitian matrices.
We thus obtain
\begin{equation}
\LGVD^{-1} \ge \left( \frac{d}{d+2\sqrt{d-1}}\right)\left(\Id_{\#{\widetilde{\mV}}} +\frac{1}{\#\mVD}J_{\widetilde{\mV}}\right)
\end{equation}
since $\Id_{\#{\widetilde{\mV}}} - \frac{1}{\#\mV}J_{\widetilde{\mV}}$ is invertible with inverse 
$\Id_{\#{\widetilde{\mV}}} + \frac{1}{\#\mVD}J_{\widetilde{\mV}}$: indeed,
\[
\begin{split}
\left(\Id_{\#{\widetilde{\mV}}} - \frac{1}{\#\mV}J_{\widetilde{\mV}}\right)\left(\Id_{\#{\widetilde{\mV}}} + \frac{1}{\#\mVD}J_{\widetilde{\mV}}\right)
&=\Id_{\#{\widetilde{\mV}}}- \frac{1}{\#\mV}J_{\widetilde{\mV}}+ \frac{1}{\#\mVD}J_{\widetilde{\mV}}-\frac{1}{\#\mV\#\mVD}J_{\widetilde{\mV}}J_{\widetilde{\mV}}\\
&=\Id_{\#{\widetilde{\mV}}}+\frac{\#\mV-\#\mVD-\#\widetilde{\mV}}{\#\mV\#\mVD} J_{\widetilde{\mV}}=\Id_{\#{\widetilde{\mV}}}
\end{split}
\]
recalling that, by construction, $\widetilde{\mV}=\mV\setminus \mVD$.
We conclude that
\begin{equation}
\left(\LGVD^{-1} \mathbf{1}_{\#{\widetilde{\mV}}},\mathbf{1}_{\#{\widetilde{\mV}}}\right) \ge \frac{d}{d + 2\sqrt{d-1}} \left( \left(\Id_{\#{\widetilde{\mV}}} + \frac{1}{\#\mVD}J_{\widetilde{\mV}}\right) \mathbf{1}_{\#{\widetilde{\mV}}} ,\mathbf{1}_{\#{\widetilde{\mV}}}\right)= \frac{d}{d + 2\sqrt{d-1}}\cdot \frac{\#\mV \#\widetilde{\mV}}{\#\mVD}.
\end{equation}
Substituting this into \eqref{eq:discretetorsion} yields 
\begin{equation}\label{eq:torsionlowerbound}
T(\Graph;\mVD) \ge  \frac{d\#\mV}{24}+\frac{d^2\#\mV \#\widetilde{\mV}}{4(d + 2\sqrt{d-1})\#\mVD}.
\end{equation}

We now let $\mVD$ be an arbitrary singleton.
Combining \eqref{eq:torsionlowerbound} with \eqref{eq:eigenvaluelowerbound} and  $|\Graph| = \frac{d\#\mV}{2}$ we finally obtain, for any Ramanujan metric graph $\Graph$ {in which $\mVD$ is a singleton,}
\begin{equation}
\begin{split}
\frac{\lambda_1(\Graph;\mVD) T(\Graph;\mVD)}{|\Graph|} 
&\ge \frac{2\left(1 - \frac{2\sqrt{d-1}}{d}\right)\frac{\#\mVD}{\#\mV} \left[ \frac{d\#\mV}{24}+\frac{d^2\#\mV \#\widetilde{\mV}}{4(d + 2\sqrt{d-1})\#\mVD} \right]}{\frac{d\#\mV}{2}}\\
& = \frac{1}{6\#\mV}\left(1 - \frac{2\sqrt{d-1}}{d}\right) + \frac{\#{\mV}-1}{\#\mV}\cdot\frac{d - 2\sqrt{d-1}}{d + 2\sqrt{d-1}}.
\end{split}
\end{equation}

To prove that the supremum over all possible metric graphs is exactly $1$, we evaluate a double asymptotic limit over a \emph{non-bipartite} LPS expander $(\mG^{p,q})$ and the associated LPS metric expander $(\Graph^{p,q})$. As usual, $(p,q)$ is taken to be a pair of compatible primes in the LPS construction, but this time we want to study the asymptotics behavior as both $p,q$, hence both $d,\#\mV$ grow.
In this way we find
\begin{equation}
\begin{split}
\frac{\lambda_1(\Graph;\mVD) T(\Graph;\mVD)}{|\Graph|} 
&\ge 1-o(\#\mV^{-1})-o(d^{-\frac{1}{2}})\qquad \hbox{as }d,\#\mV\to\infty.
\end{split}
\end{equation}

%
This finally proves the claim.
\end{proof}

\subsection{Inradius and mean distance to $\mVD$}\label{sec:meandistdir}

We conclude this note by presenting a case where it seems that our approach cannot be made to work. Let us in the following use the following notation: We write
\begin{equation}\label{eq:eta-doubly}
\eta:=\begin{cases}
2\quad\hbox{if $\Graph$ is doubly connected upon gluing all vertices in $\mVD$},\\
1\quad\hbox{otherwise}.
\end{cases}
\end{equation}

In $\Graph$, the \emph{inradius} and the \emph{mean distance} from the Dirichlet set $\mVD$ are defined as
\begin{equation}\label{eq:inrmean}
\Inr(\Graph;\mVD):=\max_{x\in \Graph}\dist_\Graph(x,\mVD)\qquad\hbox{and}\qquad
\rho(\Graph;\mVD):=\frac{1}{|\Graph|}\int_\Graph \dist_\Graph(x,\mVD)\dd x,
\end{equation}
respectively. Likewise, in $\mG$ one can define the inradius and
 the mean distance from $\mVD$
\begin{equation}
\Inr(\mG;\mVD) := \max_{\mv\in \mG}\dist_{\mG}(\mv,\mVD)\quad\hbox{and}\quad \rho(\mG;\mVD) := \frac{1}{\#\mV} \sum_{\mv\in\mV} \dist_{\mG}(\mv,\mVD).
\end{equation}

Observe that
\begin{equation}\label{eq:rhoinrV}
\rho(\Graph;\mVD)<\Inr(\Graph;\mVD)\le \frac{|\Graph|}{\eta}
\end{equation}
where the first inequality follows from Hölder and the second can be proved adapting~\cite[Lemma~6.2]{BapKenMug24} (whose proof also shows that the second inequality is strict unless $\Graph$ is a caterpillar graph ($\eta=2$) or an interval with Dirichlet conditions at precisely one endpoint $(\eta=1$)).
Also, it was proved in~\cite[Theorem~5.1]{DelRos16} that
\[
\frac{1}{\Inr(\Graph;\mVD)}=\lim_{p\to\infty} \inf_{u\in W^{1,p}_0(\Graph;\mVD)} \frac{\|u'\|_p}{\|u\|_p}.
\]

Both quantities in~\eqref{eq:inrmean} were extensively studied in \cite[Section~4.4]{Plu21b}, where it was proved that there is \underline{no} universal constant $c>0$ such that
\[
\frac{c}{\Inr(\Graph;\mVD)^2}\le \lambda_1(\Graph;\mVD)
\]
for all unilateral metric graphs, see \cite[Example~4.4.4]{Plu21b}; but that the uniform lower bound
\[
\frac{1}{|\Graph|\Inr(\Graph;\mVD)}\le \frac{1}{|\Graph|\rho(\Graph;\mVD)}\le\lambda_1(\Graph;\mVD)
\]
does hold for all metric graphs, see \cite[Theorem~4.4.3]{Plu21b}. (In fact, even the much subtler bound
\[
\frac{1}{\sup_{x\in \Graph}|B(x;\Inr(\Graph;\mVD))| \Inr(\Graph;\mVD)}\le\lambda_1(\Graph;\mVD),
\]
whose proof is based on Zorn's Lemma, holds \cite[Theorem~4.5.11]{Plu21b}.)

We also mention the upper bound
\begin{equation}\label{eq:triv-rho}
\lambda_1(\Graph;\mVD)<\frac{1}{\rho(\Graph;\mVD)^2}
\end{equation}
that has been obtained by a test function argument in~\cite[Lemma~4.1]{BapKenKra26}. Like all upper estimates based on test function arguments, it seems unlikely that the estimate in \eqref{eq:triv-rho} is sharp. It is thus natural to formulate the following.

\begin{conj}\label{rem:no-mvd-bound}
Let $\emptyset\ne \mVD\subsetneq \mV$. 
Then there exists a universal constant $C\in (0,1)$ such that
\[
\lambda_1(\Graph;\mVD)\le \frac{C}{\rho(\Graph;\mVD)^2}
\]
for all metric graphs.
\end{conj}

On the other hand, can the stronger bound 
\[
\lambda_1(\Graph;\mVD)\lesssim \frac{1}{\Inr(\Graph;\mVD)^2}
\]
hold? Let us prove that this is not the case.

\begin{theo}\label{theo:no-inr-bound}
Let $\emptyset\ne \mVD\subsetneq \mV$. 
Then there is \underline{no} universal constant $C>0$ such that
\begin{equation}\label{eq:upper-est-inr}
\lambda_1(\Graph;\mVD)\le \frac{C}{\Inr(\Graph;\mVD)^2}
\end{equation}
either for all unilateral bipartite metric graphs, or for all unilateral non-bipartite metric graphs.
\end{theo}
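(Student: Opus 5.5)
For the non-bipartite case we would keep using a fixed-degree family of non-bipartite Ramanujan graphs, but choose the Dirichlet set to be \emph{almost everything}. Then \autoref{cor:lambda1-ramanu} keeps $\lambda_1$ bounded below, while a large hole is left in $\mVD$ so that the inradius still blows up. For the bipartite case we would use balls in the $d$-regular tree with Dirichlet conditions on the leaves, comparing with the bottom of the spectrum of the infinite metric tree.

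\emph{Non-bipartite case.} Fix $d\ge 3$ and let $(\mG_n)$ be non-bipartite $d$-regular Ramanujan graphs (e.g.\ non-bipartite LPS expanders) with $\#\mV_n\to\infty$. Fix a vertex $\mv_n$ and set $r_n:=\lfloor \frac12\log_{d-1}\#\mV_n\rfloor$. Define
\[
\mVD_n:=\{\mw\in\mV_n:\dist_{\mG_n}(\mw,\mv_n)> r_n\}.
\]
The ball $B(\mv_n,r_n)$ has at most $1+d\sum_{j<r_n}(d-1)^j=\mathcal O\bigl((d-1)^{r_n}\bigr)=\mathcal O\bigl(\sqrt{\#\mV_n}\bigr)$ vertices. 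Hence $\#\mVD_n/\#\mV_n\to1$, and in particular $\emptyset\ne\mVD_n\subsetneq\mV_n$ for large $n$ (the Moore bound \eqref{eq:bound-general} guarantees vertices beyond distance $r_n$). By \autoref{cor:lambda1-ramanu},
\[
\lambda_1(\Graph_n;\mVD_n)\ge 2\Bigl(1-\frac{2\sqrt{d-1}}{d}\Bigr)(1-o(1)),
\]
which is bounded away from $0$. On the other hand, $\mv_n$ has graph distance $r_n+1$ from $\mVD_n$, and in the unilateral metric graph this distance is the same. Hence $\Inr(\Graph_n;\mVD_n)\ge r_n\to\infty$, so $C/\Inr^2\to0$ and \eqref{eq:upper-est-inr} fails.

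\emph{Bipartite case.} Let $\mT_R$ be the ball of radius $R$ around the root of the $d$-regular tree, $d\ge3$, with $\mVD$ the set of leaves (vertices at distance $R$). This graph is bipartite and unilateral, and $\Inr(\mT_R;\mVD)\ge R\to\infty$ (the root). The key step is the uniform lower bound
\[
\lambda_1(\mT_R;\mVD)\ge \arccos\Bigl(\frac{2\sqrt{d-1}}{d}\Bigr)^2 .
\]
First try a discrete argument: via \autoref{theo:vonbnic} it suffices to show
\[
\nu_1(\mT_R;\mVD)\ge 1-\frac{2\sqrt{d-1}}{d},
\]
provided $\lambda_1<\pi^2$. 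Extension by zero maps $\ell^2(\widetilde\mV)$ into finitely supported functions on the infinite tree, whose normalised Laplacian has spectrum bottom $1-\frac{2\sqrt{d-1}}{d}$ (Kesten). The Rayleigh quotients agree because the degrees of interior vertices are $d$ in both. If the degree mismatch at the root (degree $d$ in the ball, as in the full tree) causes bookkeeping trouble, an alternative is to argue directly on the metric side: extend $u\in H^1_0(\mT_R;\mVD)$ by zero to the infinite metric $d$-regular tree and use that the bottom of its spectrum equals the above $\arccos^2$ (von Below transference on the infinite tree).

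The main obstacle is this last step, namely justifying the transference for the infinite tree or handling the discrete Dirichlet problem on the finite ball. A fallback is to compute $\nu_1$ explicitly using radial symmetry: the ground state is radial, so the problem reduces to a Jacobi matrix on $\{0,\dots,R-1\}$, whose lowest eigenvalue can be bounded below by $1-\frac{2\sqrt{d-1}}{d}$ via the substitution $f(k)=(d-1)^{-k/2}g(k)$. Either way, $\lambda_1$ stays bounded below while $\Inr\to\infty$, which disproves \eqref{eq:upper-est-inr} in the bipartite class as well.
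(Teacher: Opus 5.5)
Your proposal is correct and follows the paper's proof essentially step for step. In the non-bipartite case you use the same Dirichlet sets $\mV_n\setminus B_{r_n}(\mv_n)$ with $r_n=\lfloor\frac12\log_{d-1}\#\mV_n\rfloor$ together with \autoref{cor:lambda1-ramanu}; in the bipartite case you use the same balls in the $d$-regular tree with Dirichlet leaves, compared with the infinite tree via Kesten's formula and extension by zero. The obstacle you flag is not a real one. Every non-Dirichlet vertex of the ball has degree $d$ in both graphs, so the discrete Rayleigh quotients coincide exactly. The proviso $\lambda_1<\pi^2$ in the transference is harmless, because $\arccos\bigl(\frac{2\sqrt{d-1}}{d}\bigr)^2<\pi^2$ makes the lower bound trivial whenever $\lambda_1\ge\pi^2$.
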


The proof will be divided into two parts: the first one is based on expanders and shows us that \eqref{eq:upper-est-inr} cannot hold for all \emph{non-bipartite} unilateral metric graphs; it is crucially based on \autoref{prop:positive-density-large-inradius} below.
The second one, kindly suggested to us by ChatGPT 5.6 pro as prompted by Robert J.\ George \cite{Geo26}, is similar in spirit and shows that \eqref{eq:upper-est-inr} cannot hold for all \emph{bipartite} unilateral metric graphs. (We actually present here a simplified version of ChatGPT's proof.)


%
%

%

\begin{lemma}\label{prop:positive-density-large-inradius}
Let $d\in \N$ be fixed, $d\ge 3$, and let \((\mG_n)_{n\in\N}\) be a sequence of \(d\)-regular graphs with strictly increasing number of vertices. Then there exist subsets \(\mVDn\subset \mV_n\) such that
\begin{equation}\label{eq:density-to-one}
\lim_{n\to \infty}\frac{\#\mVDn}{\#\mV_n}= 1
\end{equation}
and, for the associated unilateral metric graphs \(\Graph_n\),
\begin{equation}\label{eq:inradius-log-lower}
\left\lfloor \frac{1}{2}\log_{d-1}\#\mV_n\right\rfloor+1\le \Inr(\mG_n;\mVDn)\qquad\hbox{as }n\to \infty.
\end{equation}
\end{lemma}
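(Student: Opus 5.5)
The plan is to take $\mVDn$ to be the complement of a combinatorial ball of radius about $\frac12\log_{d-1}\#\mV_n$. Such a ball has only about $\sqrt{\#\mV_n}$ vertices, so its complement has density tending to $1$. Its centre lies at distance exactly one more than the radius from the complement, which yields the inradius bound.

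Concretely, set $r_n:=\left\lfloor \frac12\log_{d-1}\#\mV_n\right\rfloor$. Fix any vertex $\mv_n\in\mV_n$ and let $B_n:=\{\mw\in\mV_n:\dist_{\mG_n}(\mv_n,\mw)\le r_n\}$. Then define
\[
\mVDn:=\mV_n\setminus B_n .
\]

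\emph{Step 1 (size of the ball).} In a $d$-regular graph the sphere of radius $k\ge1$ has at most $d(d-1)^{k-1}$ vertices. Summing gives
\[
\#B_n\le 1+d\sum_{k=1}^{r_n}(d-1)^{k-1}\le 1+\frac{d}{d-2}(d-1)^{r_n}\le 1+\frac{d}{d-2}\sqrt{\#\mV_n}.
\]
Hence $\#B_n/\#\mV_n\to0$, and therefore $\#\mVDn/\#\mV_n\to1$, which is \eqref{eq:density-to-one}. In particular $B_n\ne\mV_n$ for $n$ large, so $\mVDn\neq\emptyset$. Also $\mVDn\ne\mV_n$ since $\mv_n\notin\mVDn$. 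So $\emptyset\ne\mVDn\subsetneq\mV_n$ holds eventually, which is the role of the qualifier ``as $n\to\infty$''. Alternatively, nonemptiness follows from the Moore bound in \eqref{eq:bound-general}, since $\Diam(\mG_n)\ge\log_{d-1}\#\mV_n-\mathcal O(1)>r_n$.

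\emph{Step 2 (inradius).} By construction every $\mw\in\mVDn$ satisfies $\dist_{\mG_n}(\mv_n,\mw)\ge r_n+1$. Since $\mG_n$ is connected, shortest paths pass through consecutive spheres, so some vertex at distance exactly $r_n+1$ exists and lies in $\mVDn$. Thus $\dist_{\mG_n}(\mv_n,\mVDn)=r_n+1$, and hence $\Inr(\mG_n;\mVDn)\ge r_n+1$, which is \eqref{eq:inradius-log-lower}.

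For later use one may also note the metric version. Distances between vertices agree in $\mG_n$ and $\Graph_n$, so $\Inr(\Graph_n;\mVDn)\ge\dist_{\Graph_n}(\mv_n,\mVDn)=\Inr$-bound $r_n+1$ as well.

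There is no serious obstacle here. The only care needed is choosing the radius so that the ball volume $(d-1)^{r_n}$ stays $o(\#\mV_n)$ while the radius still grows logarithmically. The factor $\frac12$ gives exactly $\mathcal O(\sqrt{\#\mV_n})$, with ample room. Note that no expansion property is needed: the argument works for any $d$-regular sequence, and expansion enters only later via \autoref{cor:lambda1-ramanu}.
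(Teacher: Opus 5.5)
Your proposal is correct and is essentially the paper's own proof. You take $\mVDn$ to be the complement of the ball of radius $r_n=\lfloor\frac12\log_{d-1}\#\mV_n\rfloor$ around an arbitrary vertex, use the Moore bound to get $\#B_n=\mathcal O(\#\mV_n^{1/2})$ and hence density tending to $1$, and conclude $\Inr(\mG_n;\mVDn)\ge\dist_{\mG_n}(\mv_n,\mVDn)\ge r_n+1$. Your shortest-path argument that this distance is actually attained only makes explicit a step the paper states without detail, and the inequality alone already suffices.
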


\begin{proof}
In the following, for any \(\mv\in \mV\) and any $r>0$, denote by
\[
B^\mG_r(\mv)
:=
\{\mw\in \mV:\dist_{\mG}(\mv,\mw)\le r\}
\]
the combinatorial ball in $\mG$ of radius \(r\) centered at \(\mv\). Observe that for any $d\ge 3$, any $d$-regular graph $\mG$,  and any $\mv\in \mV$ the Moore bound yields
\begin{equation}\label{eq:moore-radius}
\#B^{\mG}_{r}(\mv)
\le
1+d\sum_{j=0}^{r-1}(d-1)^j
=
1+d\,\frac{(d-1)^r-1}{d-2}\le C_d (d-1)^r
\end{equation}
for some constant \(C_d>0\) that depends only on \(d\).

Let us now start the construction of the sets $\mVDn$: for each \(n\in \N\), choose a vertex \(\mv_n\in \mV_n\) and set
\[
r_n:=\left\lfloor \frac12 \log_{d-1}\#\mV_n\right\rfloor,
\]
whence by~\eqref{eq:moore-radius}
\[
\#B^{\mG_n}_{r_n}(\mv_n)
\le
C_d (d-1)^{r_n}
\le
C_d \#\mV_n^\frac12.
\]
Let now
\[
\mVDn:=\mV_n\setminus B^{\mG_n}_{r_n}(\mv_n)
\]
and observe that 
\[
\frac{\#\mVDn}{\#\mV_n}
=
1-\frac{\#B^{\mG_n}_{r_n}(\mv_n)}{\#\mV_n}
\ge
1-C_d \#\mV_n^{-\frac12}
\to 1\qquad\hbox{as }n\to\infty,
\]
which proves \eqref{eq:density-to-one}. In particular,  $\mVDn\ne \emptyset$ for all $n$ large enough.

By construction, every vertex in \(\mVDn\) has distance at least \(r_n+1\) from \(\mv_n\); and since \(\mVDn\neq\emptyset\), this lower bound for the distance must be attained.
In other words,
\[
\Inr(\mG_n;\mVDn)\ge \dist_{\mG_n}(\mv_n;\mVDn)
=
r_n+1,
\]
thus completing the proof of \eqref{eq:inradius-log-lower}.
\end{proof}


\begin{proof}[Proof of \autoref{theo:no-inr-bound}]
We first prove the assertion in the non-bipartite case.

Let $(G_n)$ be a family of non-bipartite Ramanujan graphs of fixed degree $d\ge 3$ with strictly increasing number of vertices. We consider the sequence of corresponding Ramanujan metric graphs $(\Graph_n)$.
By \autoref{prop:positive-density-large-inradius} we find Dirichlet sets \(\mVDn\subset\mV_n\) such that \eqref{eq:density-to-one} and \eqref{eq:inradius-log-lower} hold.
Because, clearly, $\Inr (\Graph;\mVD)\ge \Inr (\mG;\mVD)$ for any combinatorial graph $\mG$ and the associated unilateral metric graph $\Graph$, by \eqref{eq:inradius-log-lower} also
\begin{equation}\label{eq:inr-blows-metric}
\Inr(\Graph_n;\mVDn)
\ge
\left\lfloor \frac12\log_{d-1}\#\mV_n\right\rfloor+1
\to\infty\qquad\hbox{as }n\to\infty.
\end{equation}
By \autoref{cor:lambda1-ramanu} and~\eqref{eq:density-to-one}, there exists \(n_0\) such that
\begin{equation}\label{eq:lambda1-bdd-below-q}
\lambda_1(\Graph_n;\mVDn)
\ge
1-\frac{2\sqrt{d-1}}{d}\qquad\hbox{for all }n\ge n_0.
\end{equation}
Combining \eqref{eq:lambda1-bdd-below-q} and~\eqref{eq:inr-blows-metric} yields the claim.
%

Let us now turn to the bipartite case.
To this aim, let again $d\ge 3$, and let $\mT^{(d)}$ be the infinite $d$-regular tree rooted at a vertex $\mv_0$.
By Kesten's formula, see e.g.~\cite[Theorem~12.6.2]{Dav07}, the adjacency spectrum of $\mT^{(d)}$ is $[-2\sqrt{d-1},2\sqrt{d-1}]$; hence, the spectrum of $\mathcal L=\Id-d^{-1}A$ is $[1-\frac{2}{d}\sqrt{d-1},1+\frac{2}{d}\sqrt{d-1}]$.

Now, let us consider the ball $\mT^{(d)}_r:=B^{\mT^{(d)}}_r(\mv_0)$ within $\mT^{(d)}$ centered at $\mv_0$; and impose Dirichlet boundary conditions at all leaves of $\mT^{(d)}_r$, i.e., $\mVD=\partial \mT^{(d)}_r$: we apply domain monotonicity to conclude that
\begin{equation}\label{eq:no-inr-upp-bd-chatgp}
\nu_1(\mT^{(d)}_r;\mVD)\ge \nu_1(\mT^{(d)})\ge 1-\frac{2\sqrt{d-1}}{d}\qquad\hbox{for all }r>0.
\end{equation}
By \autoref{theo:vonbnic} the corresponding unilateral metric graph $\mathcal T^{(d)}_r$ satisfies
\[
\lambda_1(\mathcal T^{(d)}_r;\mVD	)\ge \arccos\left(\frac{2\sqrt{d-1}}{d}\right)^2\qquad\hbox{for all }r>0.
\]
This immediately yields the claim, since $\Inr(\mathcal T^{(d)}_r;\mVD)=r$.
\end{proof}

Let us conclude by observing that, in spite of \autoref{theo:no-inr-bound}, an upper bound is possible if inradius and mean distance are combined.

\begin{prop}\label{lemma:triv-rhoinr}
Let $\emptyset\ne \mVD\subsetneq \mV$. Then for any metric graph $\Graph$ there holds
\begin{equation}\label{eq:l1inrrho}
\lambda_1(\Graph;\mVD)<\frac{\#\mE+1}{\Inr(\Graph;\mVD)\rho(\Graph;\mVD)};
\end{equation}
and, in fact,
\[
\lambda_1(\Graph;\mVD)<\frac{\#\mE+2}{2\Inr(\Graph;\mVD)\rho(\Graph;\mVD)}
\]
if, additionally, $\Graph$ is doubly connected upon gluing all vertices in $\mVD$.
\end{prop}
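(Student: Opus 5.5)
This is a test-function argument: choose a single trial function built from the distance to $\mVD$ and bound its Rayleigh quotient in \eqref{eq:courantf-cont}. The candidate is
\[
u(x):=\dist_\Graph(x,\mVD),
\]
which lies in $H^1_0(\Graph;\mVD)$ and is $1$-Lipschitz, so $|u'|=1$ almost everywhere. This gives $a(u)=|\Graph|$. In the denominator we will use
\[
\|u\|_{L^2}^2=\int_\Graph \dist(x,\mVD)^2\dd x .
\]
Since the target bound contains the product $\Inr\cdot\rho$, we want to bound $\|u\|_2^2$ from below by something like $\Inr(\Graph;\mVD)\cdot\int_\Graph u\dd x=\Inr\,\rho\,|\Graph|$, up to a combinatorial loss. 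A global inequality of this kind is false in general, since $u\le\Inr$ gives the wrong direction. So we need a finer test function, or a sharper estimate of $\|u\|_2^2$ that exploits the structure of $u$.

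The refined plan works edge by edge. On each edge $\me$ the function $u$ is piecewise linear with slope $\pm1$: it has at most one interior maximum (a tent), or it is monotone. Take $\mE$ edges of lengths $\ell_\me$. On an edge where $u$ ranges between values $m_\me\le M_\me$ with slope $\pm1$, one computes $\int_\me u^2$ explicitly and compares it with $\int_\me u$ and with $\ell_\me$. To make $\Inr$ appear, we pick a point $x_0$ where $\Inr$ is attained. A shortest path from $x_0$ to $\mVD$ carries $u$ linearly from $\Inr$ down to $0$, so it contributes $\int u^2\ge \Inr^3/3$ along that geodesic. A cleaner route is to use instead the truncated family $u_t=\min\{u,t\}$, or a combination $u+c$, and optimise the constant. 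The bound we aim for is
\[
\lambda_1\le \frac{|\Graph|}{\|u\|_2^2},
\]
together with a Cauchy--Schwarz/Chebyshev-type inequality
\[
\|u\|_2^2\ge \frac{\Inr\cdot\int u}{\#\mE+1},
\]
proved edgewise. Along each edge, $\int_\me u^2\ge \frac{\max_\me u}{c}\int_\me u$ for a tent or monotone linear profile; for a linear profile from $0$ to $M$ this ratio is $\frac{M^3/3}{M\cdot M^2/2}=\frac23$ relative to $M\int u$. Summing, and using that the edges containing points near $x_0$ realise $\max u=\Inr$, we obtain $\|u\|_2^2\gtrsim \Inr\,\rho\,|\Graph|/(\#\mE+1)$. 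This is where the factor $\#\mE+1$ is meant to arise: from counting edges and the extra geodesic piece. The doubly connected case should gain a factor $2$ because $x_0$ is then reachable along two disjoint paths, in the spirit of the definition of $\eta$ in \eqref{eq:eta-doubly} and the inequality \eqref{eq:rhoinrV}.

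The main obstacle is the step $\|u\|_2^2\ge \Inr\,\|u\|_1/(\#\mE+1)$ itself. Edges far from $x_0$ have small maxima, so the edgewise Chebyshev inequality yields only $\max_\me u$ rather than $\Inr$. My first attempt will be to avoid this by choosing a different test function, namely $v=u\cdot w$, or the test function built from $u$ on a geodesic tree from $x_0$. Failing that, I will use the elementary inequality
\[
\int u^2\ \ge\ \frac{\left(\int u\right)^2}{|\Graph|}
\]
together with the estimate
\[
\int u\ \ge\ \frac{\Inr^2}{2},
\]
obtained from the geodesic. This would give
\[
\lambda_1\le \frac{|\Graph|^2}{\left(\int u\right)^2}=\frac{1}{\rho^2},
\]
which recovers \eqref{eq:triv-rho}. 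I would then interpolate between this bound and the geodesic bound $\int u^2\ge \Inr^3/3$, with the constant $\#\mE+1$ tracked through the strictness of the inequalities in \eqref{eq:rhoinrV}.
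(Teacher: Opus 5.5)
You have the right framework, and it is the paper's. With $u=\dist_\Graph(\cdot,\mVD)$ one has $\lambda_1(\Graph;\mVD)\le|\Graph|/\|u\|_2^2$, $\|u\|_1=\rho(\Graph;\mVD)|\Graph|$ and $\|u\|_\infty=\Inr(\Graph;\mVD)$. So both claims reduce to the reverse interpolation inequality $\|u\|_2^2>\frac{\eta}{\#\mE+\eta}\|u\|_1\|u\|_\infty$, with $\eta\in\{1,2\}$ as in \eqref{eq:eta-doubly}.

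That inequality is the whole content of the proposition, and you do not prove it. The edgewise Chebyshev argument fails for the reason you give yourself, and the alternative test functions are never specified. The fallback does not close either. To turn $A:=\|u\|_1^2/|\Graph|\le\|u\|_2^2$ and $B:=\Inr(\Graph;\mVD)^3/3\le\|u\|_2^2$ into a bound involving $\#\mE$, you need an upper bound on $|\Graph|$ in terms of $\Inr$ and the number of edges. \eqref{eq:rhoinrV} cannot supply this: it contains no edge count and bounds $\Inr$ by $|\Graph|$, the wrong direction.

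Suppose you add the correct fact $\ell_\me\le 2\Inr(\Graph;\mVD)$. It holds because on each edge $u=\min\{u(\mv)+x,\,u(\mw)+\ell_\me-x\}$ is a tent with maximum at least $\ell_\me/2$, so $|\Graph|\le 2\#\mE\,\Inr(\Graph;\mVD)$. Even then, $\|u\|_2^2\ge\sqrt{AB}$ only yields $\lambda_1(\Graph;\mVD)\le\sqrt{6\#\mE}/\bigl(\Inr(\Graph;\mVD)\rho(\Graph;\mVD)\bigr)$. This implies the claim only when $\#\mE\ge 4$. In the doubly connected case, two edge-disjoint paths give $\|u\|_2^2\ge\frac23\Inr(\Graph;\mVD)^3$, but the same computation works only for $\#\mE\ge 8$.

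The missing idea is the level-set (coarea) representation $\|u\|_p^p=\int_0^M t^pP(t)\,\mathrm{d}t$, where $M=\Inr(\Graph;\mVD)$ and $P(t)=\#u^{-1}(t)$.
\begin{itemize}
\item The tent structure gives $P(t)\le 2\#\mE$, since each level is met at most twice per edge.
\item The intermediate value theorem gives $P(t)\ge1$ on $[0,M]$.
\item In the doubly connected case, $P(t)\ge 2$ for a.e.\ $t$, since a single non-vertex point cannot separate a maximiser of $u$ from $\mVD$. This is the rigorous form of your two-paths heuristic.
\end{itemize}
After rescaling $s=t/M$, one minimises $\int_0^1 s^2Q(s)\,\mathrm{d}s\big/\int_0^1 sQ(s)\,\mathrm{d}s$ over measurable $Q$ with values in $[\eta,2\#\mE]$. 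By the bathtub principle the minimiser is $Q=2\#\mE$ on $[0,x]$ and $Q=\eta$ on $(x,1]$. Optimising in $x$ gives the minimum value $x_0$, the unique root in $(0,1)$ of $(2\#\mE-\eta)x^3+3\eta x-2\eta$. Since $x_0^3<x_0$, this forces $x_0>\eta/(\#\mE+\eta)$. This proves both strict bounds for every graph, using the plain distance function as test function.
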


\begin{proof}
We adapt the proof of \cite[Lemma~4.1]{BapKenKra26} to find
\begin{equation}\label{eq:new-interp}
\lambda_1(\Graph;\mVD)\le \frac{\|\dist_\Graph(\cdot;\mVD)'\|^2_2}{\|\dist_\Graph (\cdot;\mVD)\|^2_2}<\frac{(\#\mE+1)|\Graph|}{\|\dist_\Graph (\cdot;\mVD)\|_1\|\dist_\Graph (\cdot;\mVD)\|_\infty}
= \frac{\#\mE+1}{\Inr(\Graph;\mVD)\rho(\Graph;\mVD)},
\end{equation}
and likewise 
\[
\lambda_1(\Graph;\mVD)< \frac{\#\mE+2}{2}\cdot \frac{1}{\Inr(\Graph;\mVD)\rho(\Graph;\mVD)},
\]
in the doubly connected case, where the second inequality in~\eqref{eq:new-interp} follows from \autoref{lem:dist-gm} below.
\end{proof}

In the proof of \autoref{lemma:triv-rhoinr} we have used the following reverse interpolation inequality, which may be of independent interest.

\begin{lemma}\label{lem:dist-gm}
Let $\mathcal{G}$ be a compact metric graph, and $\emptyset\ne \mVD \subsetneq \mV$. For $\eta$ defined in~\eqref{eq:eta-doubly}, let $c_\eta$ be the unique real root of
\[
\Theta_\eta(x):=    (2\#\mE-\eta )x^3 + 3\eta x - 2\eta,\qquad \eta=1,2.
\]

Then the function $f:\Graph\ni x\mapsto \dist_\Graph (x, \mVD)\in [0,\infty)$ satisfies
\begin{equation}\label{eq:f1f2finty}
\|f\|_2^2 \ge c_1 \|f\|_1 \|f\|_\infty;
\end{equation}
and, in fact,
\begin{equation}\label{eq:f1f2finty-doubly}
\|f\|_2^2 \ge c_2 \|f\|_1 \|f\|_\infty
\end{equation}
if, additionally, $\Graph$ is doubly connected upon gluing all vertices in $\mVD$, 
and there holds
\[
c_1 \ge \frac{1}{\#\mE+1}\qquad\hbox{and}\qquad c_2\ge \frac{2}{\#\mE+2}.
\]
Indeed, $c_\eta\asymp \left(\frac{\eta}{\#\mE}\right)^{\frac13}$ as $\#\mE\to\infty$.
\end{lemma}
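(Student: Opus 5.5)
The plan is to pass to the distribution function of $f$ and reduce \eqref{eq:f1f2finty}--\eqref{eq:f1f2finty-doubly} to a one-dimensional extremal problem whose optimum is the root $c_\eta$ of $\Theta_\eta$. Write $M:=\|f\|_\infty=\Inr(\Graph;\mVD)$ and $\mu(t):=|\{x\in\Graph: f(x)>t\}|$ for $t\in[0,M]$. The layer-cake formula gives
\[
\|f\|_1=\int_0^M\mu(t)\dd t,\qquad \|f\|_2^2=\int_0^M 2t\,\mu(t)\dd t ,
\]
so we must show $\int_0^M 2t\mu\ge c_\eta M\int_0^M\mu$. Both sides are invariant under $t\mapsto \kappa t$, $\mu\mapsto \kappa\mu$. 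Hence we may normalise $M=1$, and the claim becomes a statement about admissible profiles $\mu$ on $[0,1]$.

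Next I would derive the constraints on $\mu$ from the structure of $f$.

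First, on each edge $f$ is the minimum of two affine functions of slope $\pm1$, since it is the distance to $\mVD$ passing through one of the two endpoints. It is therefore a tent or a linear piece with $|f'|=1$, and each level $\{f=t\}$ meets each edge in at most two points. By the coarea formula on $\Graph$ (using $|f'|=1$ a.e.), $-\mu'(t)=\#\{f=t\}\le 2\#\mE$ for a.e.\ $t$.

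Second, take a point $x_0$ with $f(x_0)=M$. A shortest path from $x_0$ to $\mVD$ is traversed with $f$ decreasing at unit speed, so every level $t<M$ is hit at least once and $-\mu'\ge 1$. In the doubly connected case (gluing $\mVD$ to a point), $x_0$ cannot be separated from $\mVD$ by a single point. I would argue that each level $t\in(0,M)$ is then hit at least twice, which gives $-\mu'\ge \eta$. Together with $\mu(1)=0$, this yields the admissible class
\[
\eta\le -\mu'\le 2\#\mE,\qquad \mu(1)=0 .
\]

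Then I would minimise $Q(\mu):=\int_0^1 2t\mu/\int_0^1\mu$ over this class. This is the main step. Writing $\mu(t)=\int_t^1 g$ with $g=-\mu'\in[\eta,2\#\mE]$, both numerator and denominator are linear in $g$:
\[
\int_0^1\mu=\int_0^1 s\,g(s)\dd s,\qquad \int_0^1 2t\mu=\int_0^1 s^2 g(s)\dd s .
\]
So we are minimising a ratio of two linear functionals over a box. The ratio $s^2/s=s$ is increasing, so a bathtub/Dinkelbach argument shows the minimiser is bang-bang: $g=2\#\mE$ on $[0,a]$ and $g=\eta$ on $(a,1]$. The argument is that for the optimal value $c$ one minimises $\int (s^2-cs)g$, whose integrand is negative exactly on $s<c$, so $a=c$.

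Computing, the optimal value $c$ satisfies the fixed-point equation
\[
\int_0^1 (s^2-cs)\,g_c(s)\dd s=0,
\]
that is,
\[
2\#\mE\Bigl(\tfrac{c^3}{3}-\tfrac{c^3}{2}\Bigr)+\eta\Bigl(\tfrac{1-c^3}{3}-c\,\tfrac{1-c^2}{2}\Bigr)=0 .
\]
Multiplying by $-6$ gives exactly $(2\#\mE-\eta)c^3+3\eta c-2\eta=0$, i.e.\ $\Theta_\eta(c)=0$. Since $\Theta_\eta$ is strictly increasing, this real root is unique.

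Finally, the explicit bounds follow from monotonicity of $\Theta_\eta$. It suffices to check $\Theta_1\bigl(\frac{1}{\#\mE+1}\bigr)\le 0$ and $\Theta_2\bigl(\frac{2}{\#\mE+2}\bigr)\le0$, which are elementary polynomial inequalities in $\#\mE\ge1$. The asymptotics $c_\eta\asymp(\eta/\#\mE)^{1/3}$ come from balancing $2\#\mE\,c^3$ against $2\eta$ in $\Theta_\eta$.

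I expect the main obstacle to be the lower slope constraint $-\mu'\ge 2$ in the doubly connected case, in particular near $t=0$ and near $t=M$. I would first try to obtain it by a Menger-type argument: two internally disjoint paths from $x_0$ to the glued Dirichlet point. If that fails near the endpoints, I would weaken the bound to holding for a.e.\ $t$, which suffices for the integrals.
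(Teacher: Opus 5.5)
Your proposal is correct and follows essentially the same route as the paper. Both arguments count level sets with $\eta\le\#f^{-1}(t)\le 2\#\mE$ (your $-\mu'$ is exactly the paper's $P(t)$), reduce to minimising $\int_0^1 s^2Q/\int_0^1 sQ$ over a box of admissible profiles, use a bang-bang minimiser, and identify the optimum as the root of $\Theta_\eta$. Two of your steps are tighter than the paper's: the Dinkelbach linearisation rigorously justifies the paper's informal ``bathtub'' step, which the paper instead finishes by differentiating $R(x)$ and checking $R(x_0)=x_0$; and your Menger-type argument for $P(t)\ge 2$ a.e.\ supplies a justification for a point the paper only asserts.
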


Our proof is partially inspired by the proofs of \cite[Lemma~3.7]{BerKenKur17} and \cite[Theorem~4.6]{MugPlu23}.

\begin{proof}
Let $M := \|f\|_\infty$. Also, $f(\mVD)=0$.
On the one hand, because $f$ is continuous over $\Graph$, and by the Intermediate Value Theorem, $[0,M]\subset f(\Graph)$; hence, $f^{-1}(t)$ contains, for a.e.\ $t \in [0, M]$, at least one element; and indeed, at least two if $\Graph$ is doubly connected upon gluing all vertices in $\mVD$.

On the other hand, since $f$ is a distance function on a metric graph, $|f'(x)| = 1$ for a.e.~$x\in \Graph$. Hence, on any single edge, $f(x)$ is the minimum of at most two affine functions of slope $\pm 1$, meaning it can achieve a given value $t$ at most twice; or at most $2\#\mE$ times over the whole $\Graph$. Let us introduce
\[
\Graph(t):= f^{-1}(t)=\{ x \in \mathcal{G} \mid f(x) = t \}
\]
and
\[
P(t) := \#\Graph(t):
\]
we have just observed that for a.e.\ $t\in [0,M]$,
\[
2\#\mE\ge P(t)\ge \left\{
\begin{aligned}
&1\quad\hbox{for general }\Graph,\\
&2\quad \hbox{in the doubly connected case.}
\end{aligned}
\right.
\]
By the coarea formula we see that for any $p\in [1,\infty)$ and $h(t):=t^p$, $t\in (0,\infty)$
\[
\int_\Graph h(f(x))\dd x= 
\int_0^M h(t) P(t) \dd t
    \]
 i.e.,
 \[
     \|f\|_p^p =      \int_0^M t^p P(t) \dd t,
     \]
see~\cite[Lemma~6.1]{BapKenMug24} for the case $p=1$.

In order to prove \eqref{eq:f1f2finty} and \eqref{eq:f1f2finty-doubly}, we seek to minimise the ratio
\[
    R := \frac{\|f\|_2^2}{\|f\|_\infty \|f\|_1} = \frac{\int_0^M t^2 P(t) \dd t}{M \int_0^M t P(t) \dd t};
\]
or else
\[
    R := \frac{\int_0^1 s^2 Q(s) \dd s}{\int_0^1 s Q(s) \dd s}
\]
over all measurable functions $Q: [0, 1] \to [\eta, 2\#\mE]$, as one  sees substituting $s :=\frac{t}{M}$ and defining $Q(s) := P(Ms)$, $s\in [0,1]$; here $\eta$ is defined as in~\eqref{eq:eta-doubly}.

The measure $s^2 \dd s$ in the numerator decays faster near zero than the measure $s \dd s$ in the denominator, the ratio $R$ is minimised when $Q(s)$ is as large as possible for small $s$ and as small as possible for large $s$. Thus, by the bathtub principle the minimiser of $R$ is a step function of the form
\[
    Q(s) = \begin{cases} 2\#\mE & \text{if } 0 \le s \le x, \\ \eta & \text{if } x < s \le 1, \end{cases}
\]
for some optimal threshold $x \in [0, 1]$; i.e., we aim to minimise
\[
R(x):=\frac{ \int_0^x 2\#\mE s^2\dd s + \eta\int_x^1  s^2\dd s }{ \int_0^x 2\#\mE s\dd s + \eta\int_x^1  s\dd s}
\]
over all $x\in [0,1]$.

Let us first focus on the case of general graphs. 
For this family of step functions, we find
\[
    \int_0^1 s Q(s) \dd s = \int_0^x 2\#\mE s\dd s + \int_x^1 s\dd s = \#\mE x^2 + \frac{\eta}{2}(1 - x^2) = \frac{1}{2}\big(\eta + (2\#\mE-\eta)x^2\big),
\]
and
\[
    \int_0^1 s^2 Q(s) \dd s = \int_0^x 2\#\mE s^2\dd s + \eta\int_x^1 s^2\dd s = \frac{2\#\mE}{3}x^3 + \frac{\eta}{3}(1 - x^3) = \frac{1}{3}\big(\eta + (2\#\mE-\eta)x^3\big).
\]
The ratio as a function of $x$ becomes
\[
    R(x) = \frac{2}{3} \cdot\frac{\eta + (2\#\mE-\eta)x^3}{\eta + (2\#\mE-\eta)x^2},\qquad x\in [0,1].
\]
To minimise $R(x)$, we look for  $x_0$ such that  $R'(x_0)=0$, i.e., for a root of the polynomial
\begin{equation}\label{eq:polyn-g}
\Theta(x):=    (2\#\mE-\eta)x^3 + 3\eta x - 2\eta.
\end{equation}
Observe that 
\[
\Theta'(x)=3\left(\left(2\#\mE-\eta\right)x^2+\eta\right)>0\qquad\hbox{for all }x\in (0,1),
\]
 i.e., $\Theta$ is a strictly monotonically increasing function such that $\Theta(0)=-2\eta$ and $\Theta(1)=2\#\mE-1+3-2=2\#\mE>0$: therefore, there exists a unique real root $x_0\in (0,1)$ of $\Theta$. 

Substituting back into $R$ we find
\[
    R(x_0)
     = \frac{2}{3}\cdot \frac{\eta + \eta\left(\frac{2-3x_0}{x_0^3}\right)x_0^3}{\eta + \eta \left(\frac{2-3x_0}{x_0^3}\right)x_0^2} 
    = \frac{2}{3}\cdot \frac{3x_0^3 -3x_0^4 }{-2x_0^3 +2x_0^2} 
    = x_0.
\]
That is, the minimum of $R$ is achieved at the real root $x_0$ of \eqref{eq:polyn-g}. In other words,
\[
 \frac{\|f\|_2^2}{\|f\|_\infty \|f\|_1} = \frac{\int_0^M t^2 P(t) \dd t}{M \int_0^M t P(t) \dd t}\ge x_0.
\]

Finally, let us show that $x_0>\frac{\eta}{\#\mE+\eta}$: because $x_0\in (0,1)$ and, hence, $x_0^3<x_0$, 
\[
\Theta(x)= (2\#\mE-\eta)x^3 + 3\eta x - 2\eta < (2\#\mE-\eta)x + 3\eta x - 2\eta = (2\#\mE + 2\eta)x - 2\eta\qquad\hbox{for all }x\in (0,1),
\]
and in particular
\[
0=\Theta(x_0)<(2\#\mE + 2\eta)x_0 - 2\eta,
\]
whence the inequalities \eqref{eq:f1f2finty} and~\eqref{eq:f1f2finty-doubly} follow.

 Likewise
\[
\Theta(x)= (2\#\mE-\eta)x^3 + 3\eta x - 2\eta > (2\#\mE-\eta)x^3 + 3\eta x^3 - 2\eta = (2\#\mE + 2\eta)x^3 - 2\eta\qquad\hbox{for all }x\in (0,1),
\]
and in particular
\[
0=\Theta(x_0)>(2\#\mE + 2\eta)x_0^3 - 2\eta,
\]
i.e.,
\[
\sqrt[3]{\frac{2\eta}{2\#\mE+2\eta}}>x_0.
\]
This shows that $x_0\searrow 0$ as $\#\mE\to\infty$.
The claimed asymptotic behaviour of $c_\eta$ follows balancing the cubic term against the constant term in $\Theta_\eta(x)=0$. 
\end{proof}

\begin{rem}
It is possible to prove that, with the same notation as in \autoref{lem:dist-gm}, there holds  for all $\Graph$
\[
\|f\|_2^2 \ge \frac{1}{3(2\#\mE)^\alpha}\|f\|_1^{\alpha} \|f\|^{3-2\alpha}_\infty,
\]
for any $\alpha\in (0,1]$, whence adapting the proof of \autoref{lemma:triv-rhoinr} we see that \eqref{eq:l1inrrho} can be, for all $\alpha\in (0,1]$ and some $c=c(\alpha,\#\mE)$, extended to
\[
\lambda_1(\Graph;\mVD)\le\frac{c(\alpha,\#\mE)|\Graph|^{1-\alpha}}{\Inr(\Graph;\mVD)^{3-2\alpha}\rho(\Graph;\mVD)^\alpha}.
\]
\end{rem}

\end{document}